\newcommand\blfootnote[1]{%
  \begingroup
  \renewcommand\thefootnote{}\footnote{#1}%
  \addtocounter{footnote}{-1}%
  \endgroup
}
\theoremstyle{plain}
\newtheorem{lemma}{Lemma}
\newtheorem{proposition}{Proposition}
\newtheorem{theorem}{Theorem}
\newtheorem{corollary}{Corollary}
\newtheorem{conjecture}{Conjecture}
\theoremstyle{remark}
\newtheorem{remark}[lemma]{Remark}
\newtheorem*{remark*}{Remark}
\newtheorem*{notation*}{Notation}
\newtheorem{question}{Question}
\newtheorem*{question*}{Question}
\theoremstyle{definition}
\newtheorem*{definition*}{Definition}
\newtheorem*{remarks*}{Remarks}
\newtheorem*{example*}{Example}
\newtheorem*{examples*}{Examples}
\newtheorem*{conjecture*}{Conjecture}
\newtheorem*{conjectures*}{Conjectures}
\def\N{\mathbb{N}}
\def\Z{\mathbb{Z}}
\def\G{\mathcal{G}}
\def\F{\mathbb{F}}
\def\B{\mathcal{B}}
\DeclareMathOperator{\mideal}{\mathfrak{m}}
\DeclareMathOperator{\Hcal}{\mathcal{H}}
\DeclareMathOperator{\Sp}{Sp}
\DeclareMathOperator{\SO}{SO}
\DeclareMathOperator{\GL}{GL}
\DeclareMathOperator{\SL}{SL}
\DeclareMathOperator{\hspec}{hspec}
\DeclareMathOperator{\sthspec}{hspec_{st}}
\DeclareMathOperator{\hdim}{hdim}
\DeclareMathOperator{\Mat}{M}
\DeclareMathOperator{\hD}{hD}
\DeclareMathOperator{\sthdim}{hdim_{st}}
\DeclareMathOperator{\grm}{gr }
\DeclareMathOperator{\pr}{pr}
\DeclareMathOperator{\Hsthdim}{hdim_{st}^{\textit{H}}}
\title{Standard Hausdorff spectrum of compact $\F_p[[t]]$-analytic groups}
\author{Jon Gonz\'alez-S\'anchez and Andoni Zozaya}
\date{}
\begin{document}

\maketitle

\begin{abstract}
\blfootnote{
The authors are supported by the Basque Government grant IT974-16 and by the Spanish Government grant MTM2017-86802-P, partly with ERDF funds. The second author is supported as well by the Spanish Ministry of Science, Innovation and Universities under grant FPU17/04822. \newline
\textbf{Mathematical Subject Classification (2010)}: Primary 20E18; Secondary 28A78. \newline
 \textbf{Keywords:} profinite groups, $\F_p[[t]]$-analytic groups, Hausdorff dimension,  classical Chevalley groups.}We prove that the $\F_p[[t]]$-standard Hausdorff spectrum of a compact $\F_p[[t]]$-analytic group contains a real interval and that it coincides with the full unit interval when the group is soluble. Moreover, we show that the $\F_p[[t]]$-standard Hausdorff spectrum of classical Chevalley groups over $\F_p[[t]]$ is not full, since $1$ is an isolated point thereof.
\end{abstract}

\section{Introduction}

The concept of Hausdorff dimension arose as a generalisation of the notion of topological dimension. This dimension can be defined in any metric space; and in the specific group theoretical context, the study of the Hausdorff dimension in profinite groups has attracted much attention. \\

 If $G$ is a countably based profinite infinite group, a \emph{filtration series} of $G$ is a family  $\{ G_n\}_{n \in \N}$ of descending open subgroups which is a neighbourhood system of the identity, that is, $\bigcap_{n \in \N} G_n = \{ 1 \}$. Such a filtration defines a metric on $G$ by letting
$$d(x, y) = \inf \left\{ |G : G_n|^{-1} \mid xy^{-1} \in G_n \right\}.$$
This notion of distance makes $G$ a metric space and so one can define the Hausdorff dimension of a subset $X \subseteq G$ with respect to that filtration (cf. \cite[Section 2]{BSh} and \cite[Chapter 3]{Falconer}); it will be denoted by $\hdim_{\{ G_n \}}(X)$ or $\hdim(X)$ if there is no risk of confusion. Further, when a filtration consists of normal subgroups it is called \emph{normal filtration}. It was proved in \cite[Theorem 2.4]{BSh} that when the filtration is normal and $H$ is a closed subgroup of $G$ then  one can compute the Hausdorff dimension by the following formula:
\begin{equation}
\label{hdim}
\hdim_{\{G_n\}}(H) = \liminf_{n \rightarrow \infty} \frac{\log |HG_n: G_n |}{ \log|G:G_n|}.
\end{equation}

It has been repeatedly pointed out that the Hausdorff dimension may depend on the chosen filtration. Furthermore,  for a fixed filtration $\{G_n\}_{n \in \N}$ we can consider the collection of all the values $\hdim_{\{ G_n \}}(H)$ as $H$ ranges over closed subgroups of $G,$ that is, the set
$$\hspec_{\{ G_n \}}(G) = \left\{ \hdim_{\{ G_n \}}(H) \mid H \leq_c G \right\},$$
which is called the \emph{Hausdorff spectrum} of $G$ with respect to the filtration series $\{ G_n \}_{n \in \N}.$
It turns out that these families may have little or no resemblance as one changes the filtration.  For example, consider the additive $p$-adic analytic group $\Z_p \oplus \Z_p.$ For finitely generated pro-$p$ groups of this kind there exists a natural filtration series, namely the \emph{$p$-power filtration} given by $G_n = G^{p^n}.$ It is immediate to see that with respect to this filtration series one has  $\hspec_{\{ G_n \}}(\Z_p \oplus \Z_p) = \{ 0, 1/2, 1\},$ and so  in particular it is finite. 

However, in \cite[Theorem 1.3]{KTZ} it is shown that there exists a filtration series $\{ G_n \}_{n \in \N}$ such that $\hspec_{\{ G_n \}}(\Z_p \oplus \Z_p)$ contains the real interval $\left[\frac{1}{p+1}, \frac{p-1}{p+1} \right].$ Thus, even the finiteness of the Hausdorff spectrum is not filtration invariant. 

According to \cite[Corollary 1.2]{BSh} $\hspec_{\{G^{p^n}\}}(G)$ is finite for any $p$-adic analytic pro-$p$ group $G,$ which suggests the following classical question (cf. \cite[Problem 1]{BSh}):

\begin{question}
\label{BS}
Let $G$ be a finitely generated pro-$p$ group such that $\hspec_{\left\{G^{p^n}\right\}}(G)$ is finite. Is $G$ $p$-adic analytic?
\end{question}

Clearly, although the conjecture is stated here for the $p$-power filtration, it can also be posed for many other different non-pathological filtrations (some results in this direction can be found in \cite{KTZ}). 

We will work in the setting of $R$-analytic groups where $R$ is a pro-$p$ domain; these comprise an abstract group together with an $R$-analytic manifold structure in such a way that both structures are compatible in the sense that the multiplication map and the inversion map are $R$-analytic functions. They are thoroughly studied in \cite{DDMS} and \cite{Serre}. 

It can be proved that an $R$-analytic group is profinite if and only if it is compact, and thus formula (\ref{hdim}) (with respect to any normal filtration) holds for compact $R$-analytic groups. 

In this family of groups the $p$-power filtration series can not be used in general. Indeed, $G^{p^n}$ will normally not be an open subgroup of a compact $R$-analytic group $G.$ However, they possess  a canonical filtration series, called the $R$-standard filtration series, which depends only on the $R$-analytic manifold structure of $G.$ The Hausdorff dimension relative to this filtration series -- which is introduced insightfully in Section \ref{3}-- is called the $R$-standard Hausdorff dimension.

In the present paper, we shall mostly restrict to the case $R = \F_p[[t]],$ and the main findings of this investigation are:

\begin{theorem}
\label{T1}
If $G$ is a soluble compact $\F_p[[t]]$-analytic group then the Hausdorff spectrum of $G$ with respect to the $\F_p[[t]]$-standard filtration is $[0,1].$
\end{theorem}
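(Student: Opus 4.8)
The plan is to induct on the derived length $\ell$ of $G$. Write $d = \dim G$ for the dimension of $G$ as an $\F_p[[t]]$-analytic group and let $\{G_n\}$ denote the $\F_p[[t]]$-standard filtration. Throughout I would use that this filtration is normal, that $\log_p|G:G_n| = dn + O(1)$, and -- this being the crucial input from Section \ref{3} -- that for a closed normal subgroup $N \trianglelefteq G$ the induced filtrations $\{N \cap G_n\}$ and $\{G_nN/N\}$ are commensurable with the standard filtrations of $N$ and $G/N$. In particular, via formula (\ref{hdim}), the Hausdorff spectrum of $N$ (resp. of $G/N$) computed with $\{N \cap G_n\}$ (resp. with $\{G_n N/N\}$) coincides with its standard Hausdorff spectrum.

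For the base case $\ell \le 1$ the group $G$ is abelian, so up to commensurability $G \cong \F_p[[t]]^d$, and since $\charac \F_p[[t]] = p$ every closed subgroup is a closed $\F_p$-subspace. Fixing the topological $\F_p$-basis $\{t^j \mathbf e_i : j \ge 0,\ 1 \le i \le d\}$ of $G$, to a set $S$ of basis vectors I would associate the closed subgroup $H = \overline{\langle S\rangle}$, for which $\log_p|HG_n:G_n| = |\{t^j \mathbf e_i \in S : j < n\}|$. Given $\alpha \in [0,1]$ I would choose $S$ so that this count equals $\lfloor \alpha d n\rfloor$ for every $n$; this is possible because consecutive levels contribute at most $d$ new basis vectors and $\lceil \alpha d\rceil \le d$. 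Then $\hdim(H) = \lim_n \lfloor \alpha dn\rfloor/(dn) = \alpha$, so the abelian spectrum is all of $[0,1]$.

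For the inductive step I would take $N = \overline{G^{(\ell-1)}}$, a closed abelian normal subgroup with $G/N$ soluble of derived length $\ell - 1$; set $e = \dim N$, so $\dim(G/N) = d - e$. I split $[0,1]$ into $[0, e/d]$ and $[e/d, 1]$. For the lower part I use subgroups $H \le N$: since $H \cap G_n = H \cap (N \cap G_n)$, the index $|HG_n:G_n| = |H(N\cap G_n):(N\cap G_n)|$ is exactly the datum computing $\hdim_N(H)$, whence $\hdim_G(H) = (e/d)\,\hdim_N(H)$; as $\hdim_N(H)$ sweeps $[0,1]$ (abelian base case) this realizes $[0, e/d]$. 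For the upper part I use the preimages $H = \pi^{-1}(\bar H)$ of closed subgroups $\bar H \le G/N$. A short index computation using $N \le H$ gives the exact factorisation
\begin{equation*}
\log_p|HG_n:G_n| = \log_p|\bar H\,\overline{G_n}:\overline{G_n}| + \log_p|N:N\cap G_n|,
\end{equation*}
where $\overline{G_n} = G_nN/N$; since the last summand equals $en + O(1)$ and is thus essentially linear, the liminf splits and yields $\hdim_G(H) = \big(e + (d-e)\,\hdim_{G/N}(\bar H)\big)/d$. As $\hdim_{G/N}(\bar H)$ sweeps $[0,1]$ (induction hypothesis) this realizes $[e/d, 1]$, and together the two families cover $[0,1]$.

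I expect the main obstacle to be the filtration-compatibility input highlighted in the first paragraph: the whole argument rests on identifying, up to commensurability, the restriction and the projection of the standard filtration of $G$ with the standard filtrations of $N$ and $G/N$, so that Hausdorff dimensions computed in the subgroup and the quotient can be transported back into $G$. Granting this -- which I expect to follow from the construction of the standard filtration in Section \ref{3} -- the remaining ingredients, namely the monomial construction in the abelian case and the elementary index factorisation above, are routine. A minor point I would verify is that the degenerate dimensions $e = 0$ or $e = d$ (that is, $N$ finite or $N$ open) do not break the two-interval decomposition; they do not, since in each case one of the two families already covers all of $[0,1]$.
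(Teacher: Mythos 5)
Your proposal has two genuine gaps, and the first one is precisely the obstacle that forces the paper's entire strategy. In your inductive step you take $N=\overline{G^{(\ell-1)}}$, a merely \emph{closed} abelian normal subgroup, and you assume as ``crucial input'' that the induced filtrations $\{N\cap G_n\}$ and $\{G_nN/N\}$ compute the standard Hausdorff dimensions of $N$ and of $G/N$. For $\F_p[[t]]$-analytic groups this is false for closed subgroups in general: unlike in the real or $p$-adic setting, a closed subgroup need not be an $\F_p[[t]]$-analytic subgroup, and the transport results you would need (Lemma \ref{filtrazioak tekniko} and Lemma \ref{zatsta} in the paper) require $N$ to be analytic; moreover $G/N$ need not even be an $\F_p[[t]]$-analytic group (Serre's quotient theorem also requires $N$ analytic), so your induction hypothesis cannot be applied to it. Concretely, $\F_p[[t^2]]$ is a closed subgroup of the standard group $\F_p[[t]]$ with $\sthdim\left(\F_p[[t^2]]\right)=1/2$, whereas any genuine $\F_p[[t]]$-analytic subgroup $H$ satisfies $\sthdim(H)=\dim H/\dim G$ by \cite[Main Theorem]{FGG}; so your key identity $\hdim_G(H)=(e/d)\,\hdim_N(H)$ with $e=\dim N$ already fails at $H=N$ in this example, and nothing guarantees that the closure of a term of the derived series is analytic. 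This is exactly why the paper never touches the derived series: in the linear case it intersects $G$ with a soluble series of \emph{Zariski closed} subgroups of the Zariski closure of $G$ --- such intersections are honest $\F_p[[t]]$-analytic subgroups by the Jaikin--Klopsch criterion (Theorem \ref{subset}, Corollary \ref{algebraic}) --- and it reduces the general case to the linear one through the center (analytic by Corollary \ref{centre+centraliser}) together with Jaikin-Zapirain's result that $G/Z(G)$ is linear.

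The base case is also incorrect as stated. An abelian compact $\F_p[[t]]$-analytic group is not, up to commensurability, $\F_p[[t]]^d$: its group operation is an arbitrary commutative formal group law, and characteristic $p$ of the ring does not force exponent $p$ on the group. For instance, the multiplicative law $\mathbf{F}(X,Y)=X+Y+XY$ makes $\mideal=t\F_p[[t]]$ into a one-dimensional abelian $\F_p[[t]]$-standard group, isomorphic to the group of principal units $1+t\F_p[[t]]$, which is torsion-free; its closed subgroups are not $\F_p$-subspaces, so your basis-counting construction does not apply to it. The conclusion you want is Proposition \ref{abelian} of the paper, but the proof there is different: using the shape (\ref{fgl}) of a formal group law in characteristic $p$ one shows that $S_n/S_{2n}$ is elementary abelian, whence every finitely generated closed subgroup has standard Hausdorff dimension $0$, and the full spectrum $[0,1]$ then follows from \cite[Theorem 5.4]{KTZ}.
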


\begin{theorem}
\label{T2}
If $G$ is a compact $\F_p[[t]]$-analytic group then the Hausdorff spectrum of $G$ with respect to the $\F_p[[t]]$-standard filtration contains the real interval $[0,\alpha]$ for some $\alpha \geq 1 /  \dim{G}.$
\end{theorem}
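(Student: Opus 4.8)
The plan is to exhibit a single one-dimensional closed subgroup whose standard Hausdorff dimension equals $1/\dim G$, and then to fill in the whole interval below it by passing to closed subgroups of smaller ``density''. Write $d=\dim G$ and recall from Section \ref{3} that the $\F_p[[t]]$-standard filtration $\{G_n\}$ has the feature that, for all large $n$, each layer $G_n/G_{n+1}$ is elementary abelian of rank $d$; hence $\log|G:G_n| = dn\log p + O(1)$, and by \eqref{hdim} every closed subgroup $H\leq_c G$ satisfies $\hdim_{\{G_n\}}(H)=\liminf_{n}\frac{\log|H:H\cap G_n|}{dn\log p}$. Thus the problem reduces to controlling the growth of the indices $|H:H\cap G_n|$.

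First I would construct the one-dimensional subgroup. Here the characteristic $p$ causes the essential difficulty: the closure of a cyclic subgroup is finite, so a one-dimensional closed subgroup cannot be obtained procyclically and must instead come from the $\F_p[[t]]$-module structure. Concretely, the standard filtration is governed by a Lie lattice $L$, a free $\F_p[[t]]$-module of rank $d$ with $G_n$ corresponding to $\mathfrak m^{n}L$. Choosing a basis vector $v$ of $L$, the rank-one sublattice $\F_p[[t]]\,v$ is an abelian subalgebra, so its associated standard group $C$ is a closed subgroup of $G$, isomorphic as a filtered group (up to finite index) to $(\F_p[[t]],+)$ with $C\cap G_n = C_n$ for all large $n$. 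Since $\log|C:C\cap G_n| = n\log p + O(1)$, the formula above yields $\hdim_{\{G_n\}}(C)=1/d$, which shows that $\alpha\geq 1/\dim G$ is attainable.

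Next I would fill in $[0,1/d]$. Working inside $C\cong(\F_p[[t]],+)$, for a subset $S\subseteq\N$ let $D_S$ be the closed subgroup generated by the monomials $\{t^{k}:k\in S\}$; it is a closed subgroup of $G$ with $D_S\cap G_n = D_S\cap C_n$ for large $n$, and a basis count gives $\log|D_S:D_S\cap G_n| = |S\cap[0,n)|\,\log p + O(1)$. Consequently
\[
\hdim_{\{G_n\}}(D_S)=\liminf_{n}\frac{|S\cap[0,n)|}{dn}=\frac{\underline{d}(S)}{d},
\]
where $\underline{d}(S)$ is the lower density of $S$. A standard block construction produces, for each prescribed $\delta\in[0,1]$, a set $S$ with $\underline{d}(S)=\delta$; letting $\delta$ range over $[0,1]$ realises every value in $[0,1/d]$, so $\hspec_{\{G_n\}}(G)\supseteq[0,1/d]$ and the theorem holds with $\alpha=1/d=1/\dim G$.

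The main obstacle is the construction of the second paragraph. In the $p$-adic setting one would invoke the exponential map and the theory of uniform pro-$p$ groups to translate freely between sublattices of $L$ and closed subgroups of $G$; in characteristic $p$ these tools fail, so one must rely on the intrinsic $\F_p[[t]]$-analytic structure and on the description of the standard filtration developed in Section \ref{3} to secure both the existence of $C$ and the compatibility $C\cap G_n=C_n$. I note finally that the argument can be sharpened: replacing $C$ by a filtration-compatible abelian subgroup $A$ of dimension $k$ and filling in the unit interval inside $A$ exactly as in the proof of Theorem \ref{T1} gives $\hspec_{\{G_n\}}(G)\supseteq[0,k/d]$, so one may in fact take $\alpha=\dim(A)/\dim(G)$ for a maximal such $A$.
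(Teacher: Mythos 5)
There is a genuine gap, and it sits exactly where you yourself flag ``the main obstacle'': the existence of the subgroup $C$. You posit a ``Lie lattice'' $L$, a free $\F_p[[t]]$-module of rank $d$ with $G_n$ corresponding to $\mathfrak{m}^n L$, and assert that the rank-one sublattice $\F_p[[t]]\,v$ has an ``associated standard group'' $C$ which is a closed subgroup of $G$ satisfying $C\cap G_n=C_n$. No such lattice--subgroup (or subalgebra--subgroup) correspondence exists in characteristic $p$: it is a feature of uniform pro-$p$ groups obtained via $\exp$ and $\log$, and nothing replaces it here. What the standard structure actually provides is a formal group law $\mathbf{F}(\mathbf{X},\mathbf{Y})=\mathbf{X}+\mathbf{Y}+\mathbf{G}(\mathbf{X},\mathbf{Y})$ on $\left(\mathfrak{m}^N\right)^{(d)}$, and a line $\F_p[[t]]\,v$ is in general not closed under $\mathbf{F}$: already for the two-dimensional law $\mathbf{F}\bigl((x_1,x_2),(y_1,y_2)\bigr)=(x_1+y_1,\;x_2+y_2+x_1y_1)$ the line through the first basis vector is not a subgroup, since $\mathbf{F}\bigl((x,0),(y,0)\bigr)=(x+y,xy)$. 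So the object on which your entire argument rests is never shown to exist, and saying that ``one must rely on the intrinsic $\F_p[[t]]$-analytic structure'' does not fill this in. (A smaller error nearby: it is false that ``the closure of a cyclic subgroup is finite'' --- the closure of $\langle 1+t\rangle$ in the one-dimensional multiplicative standard group $1+t\F_p[[t]]$ is a copy of $\Z_p$; what is true, by the proof of Proposition \ref{abelian}, is that finitely generated closed subgroups have standard Hausdorff dimension $0$.) Your density argument with the subgroups $D_S$ inside $C\cong(\F_p[[t]],+)$ is fine, but it is conditional on $C$, and it essentially re-proves the abelian case that the paper already has via \cite{KTZ} and Proposition \ref{abelian}; the difficulty of the theorem lies entirely in the part you assumed.

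Producing a positive-dimensional abelian piece inside an arbitrary compact $\F_p[[t]]$-analytic group is precisely the content of the theorem, and the machinery the paper deploys for it shows the gap is not easily patched. The paper argues as follows: if $Z(G)$ is infinite, then $Z(G)$ is an infinite abelian $\F_p[[t]]$-analytic subgroup by the analytic-subset criterion of Jaikin--Klopsch (Corollary \ref{centre+centraliser}), and Proposition \ref{abelian} together with Lemma \ref{zatabel} yields the interval $\left[0,\dim Z(G)/\dim G\right]$. If $Z(G)$ is finite, Corollary \ref{zatidura2} reduces the problem to $G/Z(G)$, which by \cite{Jai} is linear over $\F_p(\!(t)\!)$; the topological Tits alternative \cite{BG} then gives either an open soluble subgroup --- in which case Theorem \ref{T1} already yields the full interval $[0,1]$ --- or a dense free subgroup, hence an element $x$ of infinite order, in which case $Z(C_{G/Z(G)}(x))$ is an infinite abelian analytic subgroup and Lemma \ref{interval1}(ii) applies. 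Note that this dichotomy is needed because in characteristic $p$ one cannot even guarantee a priori that $G$ has elements of infinite order, let alone a filtration-compatible copy of $(\F_p[[t]],+)$; any correct proof has to manufacture the abelian subgroup from structural results of this kind rather than from a coordinate line.
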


In the latter result, $\dim{G}$  denotes the analytic dimension of $G$  as an $\F_p[[t]]$-analytic manifold. In addition, in Corollary \ref{alphades}  the $\alpha$ occurring in the statement of Theorem \ref{T2} is described more accurately for classical Chevalley groups over $\F_p[[t]]$; in particular we shall show that they always satisfy $\alpha \geq 1/2.$ Furthermore, in Corollary \ref{batiso} we shall prove that for most of these groups 1 is an isolated point in the spectrum, providing some examples of compact $\F_p[[t]]$-analytic groups whose spectrum with respect to the $\F_p[[t]]$-standard filtration series is not full.\\

Finally,  we outline a consequence which can be derived from Theorems \ref{T1} and \ref{T2}. An $R$-analytic subgroup is a structure which occurs both as a subgroup and a submanifold (for the latter we adopt Serre's definition in \cite[Part II, Section III.11]{Serre});  for example any open subgroup is an $R$-analytic subgroup of maximal dimension. According to \cite[Main Theorem]{FGG}, the $R$-standard Hausdorff dimension of an $R$-analytic subgroup can only take finitely many rational values. However, it follows from Theorem \ref{T2} that the $\F_p[[t]]$-standard spectrum of compact $\F_p[[t]]$-analytic groups is uncountable; hence showing that there are numerous closed subgroups that are not $\F_p[[t]]$-analytic.\\
\linebreak
\textit{Notation}
Most of the notation is standard except $X^{(n)},$ which denotes the $n$-Cartesian power of the set $X$. Apart from that,  $R$ is always a pro-$p$ domain with maximal ideal $\mathfrak{m},$ $\N$ is the set of natural numbers (including $0$), $p$ is a prime number, $\F_p$ is the finite field of $p$ elements, $\Z_p$ is the ring of $p$-adic integers and $R[[X]]$ is the power series ring with coefficients in $R$.  Moreover, and $H \leq_o G$ (resp. $H \leq_c G$) means that $H$ is an open (resp. closed) subgroup of a topological group $G.$ 

\section{Preliminaries}

Throughout this article, relating the Hausdorff dimension of a countably based profinite group to that of its subgroups and quotients will be of vital importance. Therefore, it is sometimes convenient to use the notation $\hdim_{\{G_n\}}^{G}$ to emphasize that the dimension, with respect to the filtration series $\{G_n \}_{n \in \N}$, is calculated within the group $G$.  The following result is known for subgroups  (cf. \cite[Lemma 5.3]{KTZ}), and it will  be stated here for the convenience of the reader.

\begin{lemma}
\label{KTZ5.3}
Let $G$ be a countably based profinite group, $\{ G_n \}_{n \in \N}$ a normal filtration series and $H \leq_c G$ a closed subgroup whose Hausdorff dimension is given by a proper limit. Then 
$$\hdim_{\{G_n\}}^G(K) = \hdim_{\{G_n\}}^G(H) \hdim_{\{H \cap G_n \}}^H(K)$$
for all $K \leq_c H.$
\end{lemma}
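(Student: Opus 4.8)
The plan is to reduce the whole statement to the index formula (\ref{hdim}) together with an elementary fact about the behaviour of $\liminf$ under multiplication by a convergent sequence. First I would observe that $\{H \cap G_n\}_{n \in \N}$ is itself a normal filtration series of $H$: each $H \cap G_n$ is open in $H$, it is normal in $H$ because $G_n \trianglelefteq G$, and $\bigcap_n (H \cap G_n) = H \cap \{1\} = \{1\}$. Hence formula (\ref{hdim}) is available for computing $\hdim_{\{H \cap G_n\}}^H(K)$, and all three dimensions appearing in the statement can be written as (limit) infima of quotients of logarithms of indices. We may also assume $H$ is infinite, since otherwise $\hdim_{\{G_n\}}^G(H) = 0$ and both sides of the asserted identity are readily seen to vanish.

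The key algebraic input is a pair of index identities coming from the second isomorphism theorem. Since $G_n \trianglelefteq G$ and $H \cap G_n \trianglelefteq H$, for every $K \leq_c H$ one has
\[
|KG_n : G_n| = |K : K \cap G_n| = |K(H \cap G_n) : H \cap G_n|,
\]
where the last equality uses $K \subseteq H$, so that $K \cap (H \cap G_n) = K \cap G_n$. Taking $K = H$ likewise gives $|HG_n : G_n| = |H : H \cap G_n|$. These identities let me factor the defining ratio for $\hdim_{\{G_n\}}^G(K)$ as
\[
\frac{\log|KG_n : G_n|}{\log|G : G_n|} = \frac{\log|K(H \cap G_n) : H \cap G_n|}{\log|H : H \cap G_n|} \cdot \frac{\log|HG_n : G_n|}{\log|G : G_n|}
\]
for all large $n$, the two cancelling factors being equal precisely by the identity $|H:H\cap G_n| = |HG_n:G_n|$; for large $n$ both denominators are positive because $H$ is infinite.

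It then remains to pass to the limit inferior. By hypothesis the second factor converges, along the full sequence, to $a := \hdim_{\{G_n\}}^G(H)$, while the first factor $b_n$ is a sequence in $[0,1]$ (as $K(H\cap G_n) \subseteq H$) whose $\liminf$ is $\hdim_{\{H \cap G_n\}}^H(K)$ by (\ref{hdim}). The step I expect to be the main obstacle is justifying that the $\liminf$ of the product equals $a$ times the $\liminf$ of $b_n$: this is exactly where the proper-limit assumption on $H$ is essential, since only a genuine (rather than merely inferior) limit in the second factor may be pulled out of a $\liminf$. Concretely, writing $a_n$ for the second factor, I would use $a_n b_n = a b_n + (a_n - a) b_n$ together with the boundedness of $b_n$ to conclude $(a_n - a) b_n \to 0$, whence
\[
\liminf_n (a_n b_n) = \liminf_n (a b_n) = a \liminf_n b_n = \hdim_{\{G_n\}}^G(H)\,\hdim_{\{H \cap G_n\}}^H(K),
\]
which is the desired formula. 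The only remaining subtlety is the degenerate case $a = 0$, where the right-hand side vanishes (the factor $\hdim_{\{H\cap G_n\}}^H(K)$ being finite, so there is no $0\cdot\infty$ ambiguity) and $a_nb_n \to 0$ forces the left-hand side to vanish too; I would dispose of this case at the outset so that the multiplicative manipulation of $\liminf$ may assume $a > 0$.
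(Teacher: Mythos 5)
Your proof is correct, and it is essentially the standard argument: the paper itself states this lemma without proof, quoting it from \cite[Lemma 5.3]{KTZ}, and the proof there proceeds exactly as you do, via the index identities $|KG_n : G_n| = |K : K\cap G_n| = |K(H\cap G_n) : H\cap G_n|$ (valid since $K \leq H$ and $G_n \trianglelefteq G$) and the observation that the convergent factor $\frac{\log|HG_n:G_n|}{\log|G:G_n|}$ can be pulled out of the $\liminf$, which is precisely where the proper-limit hypothesis enters. Your additional care with the degenerate cases ($H$ finite, $\hdim_{\{G_n\}}^G(H)=0$) is sound and makes the write-up complete.
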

 The Hausdorff dimension of $H$ above being a proper limit means that 
$$\hdim_{\{G_n \}}(H) = \lim_{n \rightarrow \infty} \frac{\log|HG_n :  G_n|}{\log{|G:G_n|}}.$$
Moreover, for quotients of countably based profinite groups we have the following result (cf. \cite[Lemma 2.2]{KT}).
\begin{lemma}
\label{zatidura}
Let $G$ be a countably based profinite group, $\{G_n\}_{n\in \N}$  a normal filtration series of $G$ and  $N \unlhd G$  a closed normal subgroup. Assume that $\hdim_{\{G_n\}}^{G}(N)$ is given by a proper limit. Then for every subgroup  $H \leq_c G$ containing $N$  one has
$$\hdim_{\{G_n\}}^{G}(H) = \left(1 - \hdim_{\{G_n\}}^G(N)\right)\hdim_{\{G_nN/N\}}^{G/N}(H/N) + \hdim_{\{G_n\}}^G(N).$$
\end{lemma}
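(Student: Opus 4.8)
The plan is to reduce everything to the defining $\liminf$ expressions and then exploit the hypothesis that the $N$-dimension is a genuine limit. Note first that $|G:G_n| \to \infty$, since $G$ is infinite with $\bigcap_n G_n = \{1\}$. The crucial preliminary observation is the set-theoretic identity $HG_nN = HG_n$: because $N \unlhd G$ the product $G_nN$ is a subgroup with $NG_n = G_nN$, whence $HG_nN = H(NG_n) = (HN)G_n = HG_n$, using $N \leq H$. Passing to the quotient, the third isomorphism theorem gives $|(H/N)(G_nN/N):(G_nN/N)| = |HG_n:G_nN|$ and $|(G/N):(G_nN/N)| = |G:G_nN|$, so that
$$\hdim_{\{G_nN/N\}}^{G/N}(H/N) = \liminf_{n \to \infty} \frac{\log|HG_n:G_nN|}{\log|G:G_nN|}.$$
I would denote the sequence inside this $\liminf$ by $b_n$; since $G_nN \leq HG_n \leq G$, it is bounded: $b_n \in [0,1]$.

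Next I would exploit the subgroup chain $G_n \leq G_nN \leq HG_n$ to split the numerator appearing in the dimension of $H$, obtaining
$$\frac{\log|HG_n:G_n|}{\log|G:G_n|} = \frac{\log|HG_n:G_nN|}{\log|G:G_nN|}\cdot\frac{\log|G:G_nN|}{\log|G:G_n|} + \frac{\log|G_nN:G_n|}{\log|G:G_n|}.$$
Here the hypothesis enters decisively. Since $\hdim_{\{G_n\}}^G(N)$ is given by a proper limit and $|G_nN:G_n| = |NG_n:G_n|$, the last summand converges to $d_N := \hdim_{\{G_n\}}^G(N)$; consequently the middle factor $\frac{\log|G:G_nN|}{\log|G:G_n|} = 1 - \frac{\log|G_nN:G_n|}{\log|G:G_n|}$ converges to $1 - d_N$. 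Thus the right-hand side has the shape $b_n c_n + e_n$, where $c_n \to 1-d_N$ and $e_n \to d_N$, and $(b_n)$ is bounded.

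Finally I would take $\liminf$. Since $(e_n)$ converges, $\liminf(b_nc_n+e_n) = \liminf(b_nc_n) + d_N$; and since $(b_n)$ is bounded and $c_n \to 1-d_N$, the elementary fact that $\liminf(b_nc_n) = (1-d_N)\liminf b_n$ gives
$$\hdim_{\{G_n\}}^G(H) = (1-d_N)\,\hdim_{\{G_nN/N\}}^{G/N}(H/N) + d_N,$$
which is exactly the claim. The step requiring the most care is this final limit manipulation: pulling the constant $1-d_N$ through the $\liminf$ is legitimate only because $c_n$ genuinely converges, which is precisely what the properness hypothesis on $\hdim_{\{G_n\}}^G(N)$ secures. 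I would treat the degenerate case $d_N = 1$ separately, where $c_n \to 0$ forces $b_nc_n \to 0$ by boundedness of $(b_n)$, so both sides collapse to $d_N = 1$ — consistent with the fact that $N \leq H$ already forces $H$ to have full dimension there.
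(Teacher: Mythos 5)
The paper offers no proof of this lemma at all --- it is quoted directly from \cite[Lemma 2.2]{KT} --- so there is no internal argument to compare yours against; measured against the proof in that reference, your argument is correct and essentially identical: the tower $G_n \leq G_nN \leq HG_n$, the identity $HG_nN = HG_n$ (valid since $N \leq H$ and $G_n \unlhd G$), and the proper-limit hypothesis used precisely to pull the convergent factors $c_n \to 1-d_N$ and $e_n \to d_N$ through the $\liminf$, including the correct treatment of the degenerate case $d_N = 1$ via boundedness of $(b_n)$. The only step you should make explicit is that identifying $\hdim_{\{G_nN/N\}}^{G/N}(H/N)$ with your $\liminf_{n} b_n$ uses formula (\ref{hdim}) in the quotient, which requires $\{G_nN/N\}_{n \in \N}$ to be a normal filtration series of $G/N$, i.e.\ $\bigcap_{n \in \N} G_nN = N$; this is exactly where the closedness of $N$ is needed (any $x \in \bigcap_{n} G_nN$ can be written as $x = g_ny_n$ with $g_n \in G_n$, $y_n \in N$, and $g_n \to 1$ forces $y_n \to x$, so $x \in N$), and without this remark the quotient-side $\liminf$ expression is unjustified.
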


\begin{corollary}
\label{zatidura2}
Let $G$ be a countably based profinite group with normal filtration series $\{ G_n\}_{n \in \N}$ and let $N \unlhd G$ be a finite normal subgroup. Then 
$$\hspec_{\{ G_n\} } (G) = \hspec_{\{ G_n N / N \}} (G/ N).$$
\end{corollary}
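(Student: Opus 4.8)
The plan is to reduce everything to the quotient formula of Lemma \ref{zatidura}, the crucial observation being that a finite normal subgroup is Hausdorff-negligible. First I would record that $\hdim_{\{G_n\}}^G(N) = 0$, and in fact as a proper limit: since $N$ is finite we have $|NG_n:G_n| \leq |N|$ for all $n$, whereas $|G:G_n| \to \infty$ because $\bigcap_n G_n = \{1\}$ and $G$ is infinite; hence
$$\lim_{n\to\infty} \frac{\log|NG_n:G_n|}{\log|G:G_n|} = 0.$$
Along the way I would also check that $\{G_nN/N\}_{n\in\N}$ is genuinely a normal filtration of $G/N$, i.e. that $\bigcap_n G_nN = N$; this is immediate from the finiteness of $N$ by a pigeonhole argument on the finitely many cosets $G_n\nu$ with $\nu \in N$. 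With these two facts in hand, Lemma \ref{zatidura} becomes applicable with the term $\hdim_{\{G_n\}}^G(N)$ equal to $0$.

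Next I would prove the inclusion $\hspec_{\{G_nN/N\}}(G/N) \subseteq \hspec_{\{G_n\}}(G)$. Given $\bar H \leq_c G/N$, let $H \leq_c G$ be its full preimage, so that $N \leq H$ and $H/N = \bar H$. Applying Lemma \ref{zatidura} to $H$ and using $\hdim_{\{G_n\}}^G(N) = 0$ collapses the formula to
$$\hdim_{\{G_n\}}^G(H) = \hdim_{\{G_nN/N\}}^{G/N}(\bar H),$$
so every value of the quotient spectrum is realised inside $G$.

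For the reverse inclusion I would start from an arbitrary $H \leq_c G$ and pass to $HN$, which is closed and contains $N$. The key point is that $H$ has finite index in $HN$, namely $|HN:H| = |N : N\cap H|$ divides $|N|$, and a subgroup of finite index shares the Hausdorff dimension of the ambient group: from $HG_n \leq HNG_n = (HG_n)N$ and $|HNG_n:HG_n| = |N : N\cap HG_n| \leq |N|$ one gets $0 \leq \log|HNG_n:G_n| - \log|HG_n:G_n| \leq \log|N|$, and dividing by $\log|G:G_n| \to \infty$ yields $\hdim_{\{G_n\}}^G(H) = \hdim_{\{G_n\}}^G(HN)$ through formula (\ref{hdim}). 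Combining this with the first inclusion applied to the $N$-containing subgroup $HN$ gives
$$\hdim_{\{G_n\}}^G(H) = \hdim_{\{G_n\}}^G(HN) = \hdim_{\{G_nN/N\}}^{G/N}(HN/N) \in \hspec_{\{G_nN/N\}}(G/N),$$
and the two inclusions together prove the claim.

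The only step requiring a genuine (though short) argument is the equality $\hdim_{\{G_n\}}^G(H) = \hdim_{\{G_n\}}^G(HN)$; everything else is a direct substitution into Lemma \ref{zatidura}. I expect this finite-index invariance to be the main obstacle only in the sense of bookkeeping, since it must be derived from the defining formula (\ref{hdim}) rather than quoted — but all the quantities involved are uniformly bounded by $\log|N|$ and therefore vanish after normalising by $\log|G:G_n|$.
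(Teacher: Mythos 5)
Your proof is correct, and its skeleton coincides with the paper's: both inclusions reduce to Lemma \ref{zatidura} after noting that $\hdim_{\{G_n\}}^G(N)=0$ is given by a proper limit, and the reverse inclusion passes through the equality $\hdim_{\{G_n\}}^G(H)=\hdim_{\{G_n\}}^G(HN)$ followed by an application of the quotient formula to $HN$. The one point where you genuinely diverge is how that equality is established. The paper argues metrically: $HN=\bigcup_{\nu\in N}H\nu$ is a finite union of right translates of $H$, right translation is an isometry, and Hausdorff dimension is stable under finite unions, so $\hdim_{\{G_n\}}^G(HN)=\hdim_{\{G_n\}}^G(H)$. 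You instead derive it from the counting formula (\ref{hdim}): since $|HNG_n:HG_n|=|N:N\cap HG_n|\le |N|$, the two normalised logarithms differ by at most $\log|N|/\log|G:G_n|\to 0$, so the liminfs agree. Your version is more self-contained in the group-theoretic setting, needing nothing beyond formula (\ref{hdim}); on the other hand it leans on normality of the filtration (which the hypothesis grants), whereas the paper's metric argument does not go through formula (\ref{hdim}) at all and would survive in situations where that box-counting formula is unavailable. Finally, your pigeonhole verification that $\bigcap_n G_nN=N$, so that $\{G_nN/N\}_{n\in\N}$ is genuinely a filtration series of $G/N$, is a detail the paper leaves implicit, and it is worth having.
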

\begin{proof}
Since $\hdim_{\{G_n\}}^G(N) = 0$ is given by a proper limit, the inclusion 
$$\hspec_{\{G_nN/N\}}(G/N) \subseteq \hspec_{\{G_n\}}(G)$$
is a direct consequence of the Correspondence Theorem and Lemma \ref{zatidura}. \\

For the converse, consider $\eta \in \hspec_{\{G_n\}}(G);$ then there exists $H \leq_c G$ such that $\hdim_{\{G_n \}}^G(H) = \eta.$ Thus, since $N$ is finite and the right multiplication is an isometry by Lemma \ref{zatidura} one has
\begin{align*}
\hdim_{\{ G_n \}}^G(H)  &= 
\hdim_{\{G_n \}}^G \left( \bigcup_{n \in N} Hn\right) \\& =
 \hdim_{\{ G_n \}}^G(HN)  =  \hdim_{\{ G_nN/N \}}^G(HN/N),
\end{align*}
as required. 
 \end{proof}

Finally, the combination of the above results yields the following corollary.

\begin{corollary}
\label{korzatidura1}
Let $G$ be a countably based profinite group,  $\{G_n\}_{n\in \N}$  a normal filtration series and let $N \unlhd K \leq G$ be closed subgroups such that $\hdim_{\{G_n\}}^{G}(N) = \eta$ and $\hdim^{G}_{\{G_n \}}{(K)} = \kappa$ are given by proper limits. If $\hspec_{\left\{ \frac{(K \cap G_n) N}{ N} \right\}}(K/N) = [0,1]$ then $[\eta, \kappa] \subseteq \hspec_{\{G_n\}}(G).$ 
\end{corollary}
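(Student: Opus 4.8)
The plan is to chain Lemmas \ref{KTZ5.3} and \ref{zatidura} so as to transport the full spectrum $[0,1]$ of $K/N$ up to the interval $[\eta,\kappa]$ inside $G$. Since the trivial subgroup always yields $0\in\hspec_{\{G_n\}}(G)$ and $N\le K$ gives $\eta\le\kappa$ by monotonicity of Hausdorff dimension, the case $\kappa=0$ forces $\eta=0$, so that $[\eta,\kappa]=\{0\}$ is covered trivially; I would therefore assume $\kappa>0$ throughout.

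First I would use $K$ as an intermediate subgroup of proper-limit dimension. As $\hdim_{\{G_n\}}^G(K)=\kappa$ is a proper limit, Lemma \ref{KTZ5.3} (with $K$ in the role of the distinguished subgroup) gives, for every closed $H\leq_c K$,
$$\hdim_{\{G_n\}}^G(H)=\kappa\,\hdim_{\{K\cap G_n\}}^K(H).$$
Thus it suffices to produce, inside $K$ and relative to the induced filtration $\{K\cap G_n\}$, closed subgroups whose Hausdorff dimension runs over the whole of $[\eta/\kappa,\,1]$.

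The key preliminary step — and the only genuinely delicate point — is to verify that the relative dimension of $N$ in $K$ is the \emph{proper} limit $\eta/\kappa$. Using $N\subseteq K$ one has $|N(K\cap G_n):(K\cap G_n)|=|N:N\cap G_n|=|NG_n:G_n|$ and $|K:K\cap G_n|=|KG_n:G_n|$, which yields the factorisation
$$\frac{\log|NG_n:G_n|}{\log|G:G_n|}=\frac{\log|N(K\cap G_n):(K\cap G_n)|}{\log|K:K\cap G_n|}\cdot\frac{\log|KG_n:G_n|}{\log|G:G_n|}.$$
As $n\to\infty$ the left-hand side tends to $\eta$ and the last factor tends to $\kappa>0$, both by the proper-limit hypotheses; hence the middle factor converges, and its limit is exactly $\hdim_{\{K\cap G_n\}}^K(N)=\eta/\kappa$, attained as a genuine limit rather than a mere liminf. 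It is this properness, not just the value, that licenses the next application.

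With $N\unlhd K$ and $\hdim_{\{K\cap G_n\}}^K(N)=\eta/\kappa$ a proper limit, Lemma \ref{zatidura} applies inside $K$: for every closed $H$ with $N\le H\le_c K$,
$$\hdim_{\{K\cap G_n\}}^K(H)=\Big(1-\tfrac{\eta}{\kappa}\Big)\,\hdim_{\{(K\cap G_n)N/N\}}^{K/N}(H/N)+\tfrac{\eta}{\kappa}.$$
By the Correspondence Theorem $H/N$ ranges over all closed subgroups of $K/N$ as $H$ ranges over the closed subgroups of $K$ containing $N$, and by hypothesis $\hdim_{\{(K\cap G_n)N/N\}}^{K/N}(H/N)$ attains every value in $[0,1]$. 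The affine map above therefore sweeps out the full interval $[\eta/\kappa,\,1]$, and multiplying by $\kappa$ through the product formula shows that $\hdim_{\{G_n\}}^G(H)$ attains every value in $[\eta,\kappa]$, giving the desired inclusion. Beyond the properness check, everything reduces to a direct composition of the two lemmas.
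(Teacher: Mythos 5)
Your proof is correct and takes essentially the same route as the paper's: Lemma \ref{KTZ5.3} to pass from $G$ down to $K$, then Lemma \ref{zatidura} together with the Correspondence Theorem inside $K$, then Lemma \ref{KTZ5.3} again to return to $G$. Where you go beyond the paper --- checking via the index factorisation that $\hdim_{\{K\cap G_n\}}^{K}(N)=\eta/\kappa$ is attained as a \emph{proper} limit, and disposing of the degenerate case $\kappa=0$ --- you are supplying a hypothesis that Lemma \ref{zatidura} genuinely requires but whose verification the paper's proof leaves implicit, so this is a worthwhile tightening rather than a different argument.
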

\begin{proof}
Firstly, by Lemma \ref{KTZ5.3} it follows that $\hdim_{\{K \cap G_n \}}^{K}(N) = \eta/ \kappa,$ and using the Correspondence Theorem and Lemma \ref{zatidura} we obtain
\begin{small}
$$[\eta/\kappa, 1] = \left\{  (1-\eta/\kappa)\alpha + \eta/\kappa \mid \alpha \in \hspec_{\left\{\frac{(K \cap G_n)N}{N} \right\}}(K/N) \right\}  \subseteq \hspec_{\{ K \cap G_n \}}(K).$$
\end{small}
By another application of Lemma \ref{KTZ5.3}, one concludes $[\eta, \kappa] \subseteq \hspec_{\{G_n\}}(G).$ \end{proof}

\section{$R$-standard Hausdorff dimension}
\label{3}

An $R$-analytic group $S$ is called \emph{$R$-standard} of level $N$ and dimension $d$ when there exist a homeomorphism $\phi \colon S \rightarrow \left(\mideal^N \right)^{(d)}$ such that $\phi(1) = \textbf{0}$, and a formal group law $\mathbf{F}$ over $R$ such that 
$$\phi(xy) = \mathbf{F}(\phi(x), \phi(y)) \textrm{ for every } x,y   \in S.$$
In that case, we usually write $(S, \phi)$ to denote the standard group, in order to emphasise the r\^{o}le of $\phi.$ Any  $R$-analytic group contains, by \cite[Theorem 13.20]{DDMS}, an open $R$-standard subgroup. In addition, by \cite[Proposition 13.22]{DDMS}, $R$-standard groups are pro-$p$ groups and so they are compact.

\begin{remark}
Let  $\mathbf{X}$ and $\mathbf{Y}$ be two $d$-tuples of indeterminates. Since the formal group law $\mathbf{F} \in R[[\mathbf{X}, \mathbf{Y}]]^{(d)}$ defines a group structure it is straightforward (cf. \cite[Proposition 13.16(i)]{DDMS}) to see that it has the form
\begin{equation}
\label{fgl}
\mathbf{F}(\mathbf{X}, \mathbf{Y}) = \mathbf{X} + \mathbf{Y} + \mathbf{G}(\mathbf{X}, \mathbf{Y}), \end{equation}
where every monomial involved in $\mathbf{G}$ has total degree at least $2$ and contains a non-zero power of $X_i$ and $Y_j$ for some $i, j \in \{1, \dots, d \}.$
\end{remark}

In the context of compact $R$-analytic groups a natural filtration is available. Indeed, let $G$ be a  compact $R$-analytic group and let $(S , \phi)$ be an open $R$-standard subgroup. An \emph{$R$-standard filtration} of $G$ (the one induced by $S$) is the filtration $\{ S_n\}_{n \in \N}$ defined by
$$S_{n} := \phi^{-1} \left( \left(\mideal^{N+n} \right)^{(d)} \right), \textrm{ } \forall n \in \N .$$

It is immediate to see that  an $R$-standard filtration is indeed a filtration. Furthermore, by \cite[Proposition 13.22]{DDMS} one has that $S_n \unlhd S$ for any $n \in \N$ and thus formula (\ref{hdim}) holds for $R$-standard groups with the above filtration. \\

Because of the dependence of $\hdim$ on the chosen filtration we should not assume \emph{a priori} that the Hausdorff dimension of a subgroup of a compact $R$-analytic group is the same when computed with respect to two different $R$-standard filtrations. However, the following result (cf. \cite[Theorem 3.1]{FGG}) shows that the $R$-standard Hausdorff dimension is independent of the standard subgroup.

\begin{theorem}
\label{standard}
Let $G$ be a compact $R$-analytic group and let $(S, \phi)$ and $(T, \psi)$ be two open $R$-standard subgroups of $G.$ Then 
$$\hdim_{\{S_n\}}(H) = \hdim_{\{ T_n\}} (H)$$
for every closed subgroup $H \leq G.$
\end{theorem}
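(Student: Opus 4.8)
The plan is to show that two open $R$-standard subgroups $(S,\phi)$ and $(T,\psi)$ induce filtrations that are \emph{equivalent} in a sense strong enough to force the $\liminf$ in formula (\ref{hdim}) to agree. The natural first reduction is to pass to a common open standard subgroup: since $S\cap T$ is open in both, it is commensurable with each, and by the structure theory (\cite[Theorem 13.20]{DDMS}) it contains an open standard subgroup $(U,\chi)$. Because a common refinement $\{U_n\}$ sits cofinally inside both $\{S_n\}$ and $\{T_n\}$, it suffices to prove the equality $\hdim_{\{S_n\}}(H)=\hdim_{\{U_n\}}(H)$ when $U\leq_o S$ are both standard subgroups of $G$. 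Thus the heart of the matter is comparing the filtration coming from a standard group with the one coming from an open standard subgroup of it.

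First I would quantify how the two filtrations interleave. The key point is that the $R$-standard filtration is, up to bounded index, the same as the \emph{congruence filtration} measured intrinsically on the manifold: writing $|R:\m|=q=p^f$, one has $|S:S_n|=q^{dn}\cdot\text{(const)}$, so $\log|S:S_n|$ grows linearly in $n$ with slope $d\log q$. The same holds for $\{U_n\}$ with its own dimension (which equals $\dim G=d$, since $U$ is open in $S$). The crucial claim to establish is an interleaving of the form $S_{n+c}\leq U_n\leq S_{n-c}$ for some constant $c$ and all large $n$; this follows because $\phi$ and $\chi$ are $R$-analytic charts of the same manifold, so the transition map $\chi\circ\phi^{-1}$ is given by convergent power series with coefficients in $R$, and such maps shift the valuation by a bounded amount. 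Once this interleaving is in hand, for any closed $H$ one compares the three quantities $\log|HS_{n+c}:S_{n+c}|$, $\log|HU_n:U_n|$ and $\log|HS_{n-c}:S_{n-c}|$ and uses that the denominators $\log|S:S_{n}|$ and $\log|U:U_n|$ differ only by an additive bounded term and a shift by $c$, both of which wash out in the $\liminf$ as $n\to\infty$.

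Concretely, I would aim to sandwich the finite quotients: from $S_{n+c}\leq U_n\leq S_{n-c}$ one gets chains of inclusions among the groups $HU_n$ and $HS_{n\pm c}$, yielding bounds
\begin{equation*}
\frac{\log|HS_{n+c}:S_{n+c}|-C}{\log|S:S_{n-c}|}\;\le\;\frac{\log|HU_n:U_n|}{\log|U:U_n|}\;\le\;\frac{\log|HS_{n-c}:S_{n-c}|+C}{\log|S:S_{n+c}|}
\end{equation*}
for a suitable constant $C$ absorbing the bounded-index discrepancies. Taking $\liminf$ and using that $\log|S:S_{n\pm c}|=\log|S:S_n|+O(1)$ together with the fact that the shifts $n\mapsto n\pm c$ do not affect a $\liminf$ of a sequence whose denominator tends to infinity, both outer expressions converge to $\hdim_{\{S_n\}}(H)$, squeezing the middle term to the same value.

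The main obstacle I expect is the interleaving step $S_{n+c}\leq U_n\leq S_{n-c}$: making precise that the coordinate change between two standard charts shifts $\m$-adic valuations by a uniformly bounded amount requires controlling the transition power series carefully, in particular showing that both it and its inverse have coefficients in $R$ (not merely in the fraction field) so that the distortion is two-sided. This is essentially the content invoked from \cite[Theorem 3.1]{FGG}, and everything after it is the routine $\liminf$ bookkeeping sketched above.
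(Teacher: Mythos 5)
A preliminary remark on the comparison itself: this paper never proves Theorem \ref{standard} --- it is imported as a citation from \cite[Theorem 3.1]{FGG} --- so your proposal can only be measured against that cited argument. Measured that way, your outline follows essentially the same route: pass to standard subgroups whose filtrations can be compared directly, establish a bounded interleaving $S_{n+c}\leq U_n\leq S_{n-c}$ from the behaviour of transition maps between compatible charts, and finish with $\liminf$ bookkeeping in which bounded shifts and bounded errors wash out. The skeleton is sound, and the sandwich step does work (for a suitably large constant $C$ and all large $n$).

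Two points need repair. First, your claim $|S:S_n|=q^{dn}\cdot\mathrm{const}$, hence linear growth of $\log|S:S_n|$, is true only when $R$ is a discrete valuation ring such as $\F_p[[t]]$; Theorem \ref{standard} is stated for an arbitrary pro-$p$ domain, where all one has is $|S:S_n|=p^{df(n)}$ for a non-constant polynomial $f$ whose degree is the Krull dimension of $R$ (this is exactly Lemma \ref{azpikoa}, i.e.\ \cite[Lemma 2.3]{FGG}). The bookkeeping survives --- $f(n\pm c)/f(n)\to 1$, and the error terms, no longer bounded for general $R$ but of size $O\bigl(f(n+c)-f(n-c)\bigr)$, remain $o(f(n))$ --- but it must be run through $f$. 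Second, and more importantly, your closing sentence defers the key interleaving step to ``the content invoked from \cite[Theorem 3.1]{FGG}'', which is circular: that is precisely the theorem being proved. The deferral is also unnecessary. In the framework of \cite[Chapter 13]{DDMS} and \cite{JaKl}, an $R$-analytic function is \emph{by definition} locally represented by a power series with coefficients in $R$ itself, not in the fraction field; so the hypothesis that $(S,\phi)$ and $(U,\chi)$ are standard subgroups \emph{of} $G$ --- that is, that their charts belong to the analytic atlas of $G$ --- already gives that $\chi\circ\phi^{-1}$ and $\phi\circ\chi^{-1}$ are $R$-coefficient series vanishing at $\mathbf{0}$ on some ball $\left(\m^{K}\right)^{(d)}$, and such a series maps $\left(\m^{j}\right)^{(d)}$ into $\left(\m^{j}\right)^{(d)}$ for every $j\geq K$; the constant $c$ is then controlled by $K$ and the difference of levels. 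This compatibility is where the real strength of the hypothesis lies: for a bare homeomorphism conjugating multiplication to a formal group law (which is what the paper's definition of standard group literally says), the statement would be false, since a non-analytic homeomorphism of, say, the additive group $\m$ over $\F_p[[t]]$ can permute ``coordinates'' and change the Hausdorff dimension of closed subgroups. Finally, a small omission: since the $S_n$ are normal in $S$ but not necessarily in $G$, formula (\ref{hdim}) is not directly available for the filtration $\{S_n\}$ of $G$; before manipulating any $\liminf$ expressions you need the reduction $\hdim_{\{S_n\}}^{G}(H)=\hdim_{\{S_n\}}^{S}(H\cap S)$, which is the bi-Lipschitz lemma preceding Corollary \ref{estandarrera} and whose proof is independent of Theorem \ref{standard}.
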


This Hausdorff dimension, which we will denote by $\sthdim,$ is called the \emph{standard} or \emph{$R$-standard Hausdorff dimension} of $H$  and 
$$\sthspec(G) = \left\{ \sthdim(H) \mid H \leq_c G \right\}$$
is the \emph{standard} or \emph{$R$-standard Hausdorff spectrum} of $G.$ \\

Note that an $R$-analytic subgroup of a compact  $R$-analytic group $G$ is a compact $R$-analytic group in its own right, since it is a locally closed topological subgroup of a compact group; and thus its Hausdorff dimension can be computed. In particular, an $R$-standard filtration $\{ S_n \}_{n \in \N}$ defines a Hausdorff dimension in both $G$ and the open $R$-standard subgroup $S$. In the notation of the preceding section, these dimensions are denoted respectively by $\hdim_{\{S_n\}}^G$ and $\hdim_{\{S_n\}}^S.$

\begin{lemma}
Let $G$ be a compact $R$-analytic group with open $R$-standard subgroup $(S, \phi).$ Then 
$$\hdim_{\{S_n\}}^{G}(H) = \hdim_{\{ S_n \}}^{S} (H \cap S)$$
for every closed subgroup $H \leq G.$
\end{lemma}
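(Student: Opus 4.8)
The plan is to reduce the statement to the case $H \leq S$ and then to compare the two metrics that the filtration $\{S_n\}$ induces on $G$ and on $S$. The delicate point is that, although $S_n \unlhd S$ for all $n$, the subgroups $S_n$ need \emph{not} be normal in $G$; hence formula (\ref{hdim}) is directly available when the dimension is computed inside $S$ but cannot be invoked verbatim inside $G$. I would therefore argue from the metric (equivalently, the covering) description of the Hausdorff dimension, which makes no normality assumption.

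First I would record the covering formula valid for an arbitrary filtration. With the metric $d$ on $G$ as in the introduction, right-invariance $d(xg, yg) = d(x,y)$ shows that the closed ball of radius $|G:S_n|^{-1}$ about a point $x$ is the coset $S_n x$; consequently the balls of that radius meeting a closed subgroup $H$ are precisely the $|H : H \cap S_n|$ distinct cosets $S_n x$ with $x \in H$. Since in this compact ultrametric space distinct such balls are disjoint, their number is exactly the covering number of $H$, and the standard identification of Hausdorff and box dimension in the profinite setting (cf. \cite[Section 2]{BSh}, \cite[Chapter 3]{Falconer}) yields
$$\hdim_{\{S_n\}}^{G}(H) = \liminf_{n \to \infty} \frac{\log|H : H \cap S_n|}{\log|G:S_n|},$$
and likewise for subgroups of $S$ with $\{S_n\}$ now a genuinely normal filtration of $S$.

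Then the computation is direct. As $S_n \subseteq S$ we have $H \cap S_n = (H \cap S) \cap S_n$, so
$$|H : H \cap S_n| = |H : H \cap S|\,|H \cap S : (H\cap S) \cap S_n| \quad\text{and}\quad |G:S_n| = |G:S|\,|S:S_n|.$$
Because $S \leq_o G$, the indices $|H : H \cap S| \leq |G:S|$ and $|G:S|$ are finite, hence negligible against $\log|S:S_n| \to \infty$; dividing and passing to the $\liminf$ leaves exactly $\liminf_n \frac{\log|H \cap S : (H \cap S) \cap S_n|}{\log|S:S_n|}$, which is $\hdim_{\{S_n\}}^{S}(H \cap S)$ by the same formula applied within $S$. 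I expect the main obstacle to be the bookkeeping that justifies the covering formula without normality; an equivalent and perhaps cleaner route avoiding it is to observe directly that $d$ restricted to $S$ equals $|G:S|^{-1}$ times the metric that $\{S_n\}$ induces on $S$ — a similarity, hence dimension-preserving on subsets of $S$ — while $H = \bigsqcup_i (H\cap S)g_i$ is a finite union of right $(H\cap S)$-cosets, each isometric to $H \cap S$ by right-invariance and so of equal dimension; stability of Hausdorff dimension under finite unions then gives $\hdim^G_{\{S_n\}}(H) = \hdim^G_{\{S_n\}}(H\cap S) = \hdim^S_{\{S_n\}}(H\cap S)$.
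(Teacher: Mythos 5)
Your proposal is correct, and it in fact contains two proofs; it is the ``cleaner route'' sketched in your final sentence, not your main argument, that coincides with the paper's proof. The paper proceeds exactly that way: the metric $d_G$ induced by $\{S_n\}$ on $G$ satisfies $d_G(x,y)=|G:S|^{-1}d_S(x,y)$ for $x,y\in S$, so the inclusion $(S,d_S)\hookrightarrow(G,d_G)$ is bi-Lipschitz and preserves Hausdorff dimension (by \cite[Proposition 3.3]{Falconer}), and then $\hdim_{\{S_n\}}^{G}(H\cap S)=\hdim_{\{S_n\}}^{G}(H)$ because $H\cap S$ is open in $H$, citing \cite[Lemma 2.4]{FGG} --- the content of that citation being precisely your finite-union-of-isometric-cosets observation. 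Your main route is genuinely different: it extends the coset-counting formula $\hdim_{\{S_n\}}^{G}(H)=\liminf_{n}\log|H:H\cap S_n|/\log|G:S_n|$ to a filtration that is not assumed normal in $G$, and then finishes by index bookkeeping (done correctly: $|H:H\cap S|\leq|G:S|$ and $|G:S|$ are finite, so they wash out in the limit). This buys a quantitative formula in greater generality, and it rightly isolates non-normality of the $S_n$ in $G$ as the real issue of the lemma. Its one soft spot is the step you call ``the standard identification of Hausdorff and box dimension'': the result you lean on, \cite[Theorem 2.4]{BSh}, is stated for normal filtrations, so strictly speaking you must rerun its lower-bound argument --- normalized Haar measure on $H$ as a mass distribution, combined with your observations that closed balls of radius $|G:S_n|^{-1}$ are right cosets $S_nx$ and that $H\cap S_nx$ is either empty or a right coset of $H\cap S_n$, hence of measure at most $|H:H\cap S_n|^{-1}$ --- and check that normality is never used. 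It is not, so there is no gap; but as written the crucial step is outsourced to a result whose stated hypotheses it does not meet, whereas the paper's route avoids the issue entirely by invoking the index formula only inside $S$, where $\{S_n\}$ is a genuinely normal filtration.
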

\begin{proof}
Let $d_G$ and $d_S$ be the metrics induced by the filtration $\{ S_n \}_{n \in \N}$ in $G$ and in $S$ respectively. Then $d_G(x,y) = |G:S|^{-1}d_S(x,y)$ and so the inclusion map from $(S, d_S)$ to $(G, d_G)$ is bi-Lipschitz. Hence by \cite[Proposition 3.3]{Falconer} it follows that
$$ \hdim_{\{ S_n \}}^{S} (H \cap S) =  \hdim_{\{ S_n \}}^{G} (H \cap S).$$
Moreover, since $H \cap S$ is an open subgroup of $H$ by \cite[Lemma 2.4] {FGG} we deduce that 
$$\hdim_{\{ S_n \}}^{G} (H \cap S) = \hdim_{\{ S_n \}}^{G} (H),$$
as required. 
 \end{proof}
Thus, we have the following immediate consequence.
\begin{corollary}
\label{estandarrera}
Let $G$ be a compact $R$-analytic group with an open $R$-standard subgroup $(S, \phi).$ Then $\sthspec(G) = \sthspec(S).$
\end{corollary}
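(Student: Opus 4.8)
The plan is to read the result off directly from the preceding Lemma, which identifies the dimension computed in $G$ with the dimension computed in $S$. First I would note that, because $S$ is itself $R$-standard, it is an open $R$-standard subgroup of itself carrying the very same filtration $\{S_n\}_{n \in \N}$; hence by Theorem \ref{standard} the standard spectra in the statement are
$$\sthspec(G) = \left\{ \hdim_{\{S_n\}}^G(H) \mid H \leq_c G \right\} \quad \text{and} \quad \sthspec(S) = \left\{ \hdim_{\{S_n\}}^S(K) \mid K \leq_c S \right\}.$$
Both are therefore governed by the single filtration $\{S_n\}_{n \in \N}$, and it only remains to compare the two families of values.

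For the inclusion $\sthspec(G) \subseteq \sthspec(S)$ I would send a closed subgroup $H \leq_c G$ to the closed subgroup $H \cap S \leq_c S$; the Lemma then gives
$$\hdim_{\{S_n\}}^G(H) = \hdim_{\{S_n\}}^S(H \cap S),$$
so every value in $\sthspec(G)$ already occurs in $\sthspec(S)$.

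For the reverse inclusion $\sthspec(S) \subseteq \sthspec(G)$ I would take a closed subgroup $K \leq_c S$. Since $S$ is open, and hence closed, in $G$, the subgroup $K$ is also closed in $G$, and moreover $K \cap S = K$. Applying the Lemma to $K$ viewed as a closed subgroup of $G$ yields
$$\hdim_{\{S_n\}}^S(K) = \hdim_{\{S_n\}}^S(K \cap S) = \hdim_{\{S_n\}}^G(K),$$
so every value in $\sthspec(S)$ occurs in $\sthspec(G)$. Combining the two inclusions gives the equality.

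I do not expect any genuine obstacle here, as the corollary is essentially a restatement of the Lemma; the only point deserving a moment's care is the well-definedness used in the first step, namely that the intrinsic standard Hausdorff dimension of a subgroup of $S$ agrees with $\hdim_{\{S_n\}}^S$, which rests on Theorem \ref{standard} together with the fact that $S$ is already $R$-standard.
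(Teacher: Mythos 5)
Your proof is correct and follows exactly the route the paper intends: the corollary is stated there as an immediate consequence of the preceding lemma, and your argument simply spells out the two inclusions — applying the lemma to $H \leq_c G$ via $H \cap S$, and to $K \leq_c S$ viewed as a closed subgroup of $G$ with $K \cap S = K$. The observation that $\sthdim^S = \hdim_{\{S_n\}}^S$ because $S$ is an open $R$-standard subgroup of itself is the same well-definedness point the paper relies on via Theorem \ref{standard}.
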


Accordingly, in order to study the standard Hausdorff spectrum of a compact $R$-analytic group we can assume that the original group $G$ is itself an $R$-standard group. 

Finally, we shall study the standard Hausdorff dimension of subgroups and quotients. The following lemma relates $\Hsthdim$ with the Hausdorff dimension on $H$ induced in the natural way by an $R$-standard filtration $\{S_n \}_{n \in \N}$ of $G$, i.e., $\hdim_{\{ H \cap S_n \}}^{H}.$ 

\begin{lemma}
\label{filtrazioak tekniko}
Let $G$ be a compact $R$-analytic group and $H$ an $R$-analytic subgroup of $G$. Then $\hdim_{\{H\cap S_n\}}^H(K) =  \Hsthdim(K)$ for all $K \leq_c H,$ where $\{S_n\}_{n \in \N}$ is an $R$-standard filtration of $G.$ 
\end{lemma}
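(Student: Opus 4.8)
The plan is to show that the induced filtration $\{H \cap S_n\}_{n \in \N}$ is commensurable with an intrinsic $R$-standard filtration of $H$, and then to invoke the bi-Lipschitz invariance of the Hausdorff dimension. First I would record that $H$, being an $R$-analytic subgroup of the compact $R$-analytic group $G$, is a compact $R$-analytic group in its own right; hence it contains an open $R$-standard subgroup and $\Hsthdim$ is defined. By Theorem \ref{standard}, applied with $H$ as the ambient group, $\Hsthdim(K) = \hdim_{\{V_n\}}^H(K)$ for the standard filtration $\{V_n\}_{n \in \N}$ coming from \emph{any} open $R$-standard subgroup $(V, \chi)$ of $H$. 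It therefore suffices to exhibit one such $(V, \chi)$ for which $\hdim_{\{V_n\}}^H = \hdim_{\{H \cap S_n\}}^H$.

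Writing $(S, \phi)$ for the open $R$-standard subgroup of $G$ (of level $N$ and dimension $d$) that induces $\{S_n\}$, I would choose $(V, \chi)$ to be an open $R$-standard subgroup of the open $R$-analytic subgroup $H \cap S$; it exists by \cite[Theorem 13.20]{DDMS} and necessarily has dimension $e := \dim H$. Since $V$ is open in $H$ while the $H \cap S_n$ form a neighbourhood basis of $1$ in $H$, one has $H \cap S_n \subseteq V$ for all large $n$, and then $H \cap S_n = V \cap S_n$. Thus, up to finitely many initial terms, comparing $\{H \cap S_n\}$ with $\{V_n\}$ reduces to comparing, inside the $R$-standard group $V$, its intrinsic standard filtration $\{V_n\}$ with the extrinsic filtration $\{V \cap S_n\}$ cut out by the chart $\phi$ of the overgroup.

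The key step is the commensurability estimate: I would prove that there is a constant $c \in \N$ with
$$V_{n+c} \subseteq V \cap S_n \subseteq V_{n-c} \qquad \text{for all } n \geq c.$$
The input is that $\chi$ and the restriction $\phi|_V$ are two $R$-analytic charts on $V$, so that the transition $\chi \circ (\phi|_V)^{-1}$ is an $R$-analytic diffeomorphism of $R$-analytic domains fixing $\mathbf{0}$ whose linear part lies in $\GL_e(R)$; such a map distorts the $\m$-adic valuation of its arguments by only a bounded amount, and feeding this through $\chi$ and $\phi$ yields the displayed inclusions, with $c$ governed by the valuations of the coefficients of this transition and of its inverse together with the level mismatch between $N$ and the level of $V$. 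This is exactly where the submanifold structure of $H$ inside $S$ is genuinely used, and I expect it to be the main obstacle: one must control how an $R$-analytic change of chart on $H$ interacts with the $\m$-adic filtration inherited from $S$.

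To finish, I would note that the inclusions make $\{V_n\}$ and $\{H \cap S_n\}$ commensurable filtrations of $H$ with comparable index growth (both indices grow like $|R/\m|^{ne}$), so the metrics they induce on $H$ are bi-Lipschitz equivalent; by \cite[Proposition 3.3]{Falconer} they then assign the same Hausdorff dimension to every closed subset of $H$. In particular
$$\hdim_{\{H \cap S_n\}}^H(K) = \hdim_{\{V_n\}}^H(K) = \Hsthdim(K)$$
for every $K \leq_c H$, which is the assertion.
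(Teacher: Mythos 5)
Your reduction strategy is reasonable in outline, but the proof has a genuine gap at exactly the point you yourself flag as ``the main obstacle'': the commensurability estimate $V_{n+c} \subseteq V \cap S_n \subseteq V_{n-c}$ \emph{is} the entire content of the lemma, and the justification you sketch for it does not work. The map $\phi|_V$ takes values in $\left(\mathfrak{m}^N\right)^{(d)}$ with $d = \dim G$, so it is not a chart on the $e$-dimensional group $V$; consequently $\chi \circ (\phi|_V)^{-1}$ is not a ``diffeomorphism of $R$-analytic domains'' and has no ``linear part in $\GL_e(R)$'' --- it is defined only on the $e$-dimensional subset $\phi(V)$ of $\left(\mathfrak{m}^N\right)^{(d)}$ and has no inverse in the sense your valuation argument requires. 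What the crucial inclusion $V \cap S_n \subseteq V_{n-c}$ really needs is an $R$-analytic \emph{left} inverse of the inclusion-in-coordinates $\phi \circ \chi^{-1}\colon \chi(V) \to \left(\mathfrak{m}^N\right)^{(d)}$, defined on an open neighbourhood of $\mathbf{0}$ in $R^{(d)}$, and such a left inverse exists precisely because $H$ is an $R$-analytic submanifold in Serre's sense, i.e.\ admits an adapted chart --- a fact your argument never invokes. That this cannot be sidestepped is shown by the paper's own example: for the closed but non-analytic subgroup $H = \F_p\!\left[\!\left[t^2\right]\!\right]$ of $G = \F_p[[t]]$ one gets $H \cap S_n = V_{\lceil n/2 \rceil}$, which is not within any bounded shift of $V_n$; so any correct proof of your estimate must use the submanifold hypothesis essentially. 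The paper does exactly this and thereby avoids metric comparison altogether: it takes a chart $(U,\phi)$ adapted to $H$, so that $\phi(H\cap U) = \left(\mathfrak{m}^N\right)^{(k)} \times \{0\}^{(d-k)}$, builds an open $R$-standard subgroup $S \subseteq U$ from $\phi$ via the proof of \cite[Theorem 13.20]{DDMS}, and observes that $\{H \cap S_n\}_{n \in \N}$ is then \emph{literally} an $R$-standard filtration of $H$ (project away the zero coordinates); an arbitrary standard filtration $\{T_n\}_{n \in \N}$ of $G$ is then handled by the independence identity $\hdim^H_{\{H \cap T_n\}}(K) = \hdim^G_{\{T_n\}}(K)\big/\hdim^G_{\{T_n\}}(H)$ coming from Lemma \ref{KTZ5.3} and Theorem \ref{standard}.

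There is also a secondary error in your closing step. The lemma is stated for a general pro-$p$ domain $R$, and then the indices do not grow like $|R/\mathfrak{m}|^{ne}$: for $R = \Z_p[[t]]$ one has $|R/\mathfrak{m}^n| = p^{n(n+1)/2}$, so a bounded shift $c$ produces index ratios $|V_{n-c}:V_{n+c}|$ that are unbounded in $n$, and commensurable filtrations need not induce bi-Lipschitz equivalent metrics; \cite[Proposition 3.3]{Falconer} therefore does not apply. This part is repairable: instead of bi-Lipschitz invariance, use formula (\ref{hdim}) together with Lemma \ref{azpikoa} --- since $\log_p|V:V_n| = ef(n)$ for a non-constant polynomial $f$, one has $f(n+c)/f(n-c) \to 1$, so sandwiched filtrations yield the same $\liminf$. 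But the main gap above is not repairable without introducing the adapted chart, at which point you would essentially have reproduced the paper's shorter argument.
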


\begin{proof}
Firstly, let $\{S_n\}_{n \in \N}$ and $\{T_n\}_{n \in \N}$ be two $R$-standard filtrations of $G.$ By Lemma \ref{KTZ5.3} and Theorem \ref{standard} it is straightforward that
\begin{equation}
\label{stfil}
\hdim_{\{ H \cap S_n\}}^H (K) = \hdim_{\{ H \cap T_n\}}^H (K),  \ \forall K \leq_c H.
\end{equation}

Secondly,  we shall show that there exists an open $R$-standard subgroup $S$ of $G$ such that $\{ H \cap S_n \}_{n \in \N}$ is an $R$-standard filtration of $H$. Then for any $R$-standard filtration $\{T_n \}_{n \in \N}$ of $G,$ by (\ref{stfil}) we have that 
$$\hdim^{H}_{\{ H \cap T_n \}} (K) = \hdim^{H}_{\left\{ H \cap S_n \right\}} (K) = \Hsthdim(K)$$
 for all $K \leq_c H,$ as desired. \\
 
 Let $d=\dim{G}$ and $k = \dim{H}$, since $H$ is an $R$-analytic subgroup there exists an $R$-chart $(U, \phi)$ of $1$ in $G$ such that 
\begin{align*}
\phi(H \cap U) & =   \left\{ (x_1, \dots, x_d) \in \left(\mideal^N \right)^{(d)} \middle| \ x_{k+1} = \dots = x_d = 0 \right\} \\ &
 = \left(\mideal^{N} \right)^{(k)} \times \{ 0 \}^{(d-k)},
\end{align*}
for some $N \geq 1,$ and $\phi(1)=\textbf{0}.$ Furthermore, since $U$ is open in $G$, from the proof of \cite[Theorem 13.20]{DDMS}  there exists an open $R$-standard subgroup $S$ of $G,$  of level $L \geq N,$ contained in $U$ and with homeomorphism $\phi|_S.$ Then
\begin{align*}
\phi\left(H \cap S \right)  & = 
\phi\left(S \right) \cap \phi(H \cap U) \\ & =
\left(\mideal^L\right)^{(d)} \cap \left(\left( \mideal^N \right)^{(k)} \times \{ 0\}^{(d-k)} \right) =  \left(\mideal^L \right)^{(k)} \times \{ 0\}^{(d-k)}.
\end{align*}
Therefore, if $\pi \colon (\mideal^{L})^{(k)} \times \{ 0 \}^{(d-k)} \rightarrow (\mideal^{L})^{(k)}$ is the natural homeomorphism, then $\left(H \cap S, \psi \right),$ where $\psi  = \pi \circ \phi|_{H \cap S},$ is an open $R$-standard subgroup of $H.$ 
Thus,  $$\psi\left(H \cap S_{n} \right) =  \pi\left(\phi(H \cap U) \cap \phi\left(S_{n} \right) \right) = \left(\mideal^{L + n} \right)^{(k)},$$
and one concludes that $\left\{ H \cap S_n \right\}_{n \in \N}$ is an $R$-standard filtration of $H.$ 
 \end{proof}
 
  We will focus on the case $R =\F_p[[t]]$ for quotients, since it is known (cf. \cite[Part II, Section IV.5, Remarks 2]{Serre}) that if $G$ is an $\F_p[[t]]$-analytic group and $N \unlhd G$ is a normal $\F_p[[t]]$-analytic subgroup, then $G/N$ is an $\F_p[[t]]$-analytic group. Hence, we shall relate the standard spectrum of the group and the spectrum of its analytic quotients. 

\begin{lemma}
\label{zatsta}
Let $G$ be a compact $\F_p[[t]]$-analytic group, $\{ S_n \}_{n \in \N}$ an $\F_p[[t]]$-standard filtration of $G$ and  $N \unlhd G$ a normal $\F_p[[t]]$-analytic subgroup of $G.$ Then
$$\sthdim(H) = \hdim_{\left\{ \frac{S_n N}{N} \right\}}(H),$$
for every $H \leq_c G/N.$
\end{lemma}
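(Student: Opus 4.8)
The plan is to show that, although the induced filtration $\{S_nN/N\}$ need not itself be a standard filtration of $G/N$ for an arbitrary choice of open $R$-standard subgroup, the Hausdorff dimension it computes is (i) independent of the standard filtration of $G$ we start from, and (ii) equal to the standard Hausdorff dimension of $G/N$ for one suitably adapted choice. Combining (i) and (ii) yields the claim for every standard filtration $\{S_n\}$ of $G$.

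For (i), write $\pi\colon G\to G/N$ for the projection and, given $H\leq_c G/N$, set $\tilde H=\pi^{-1}(H)$, a closed subgroup of $G$ containing $N$ with $\tilde H/N=H$. Since $N$ is an $\F_p[[t]]$-analytic subgroup, Lemma \ref{filtrazioak tekniko} identifies $\hdim_{\{S_n\}}^G(N)$ with the standard dimension of $N$, which is a genuine limit equal to $\dim N/\dim G$ (the standard filtration of $N$ having regular index growth); in particular it is a proper limit, and we may assume $\dim N<\dim G$, the case $\dim N=\dim G$ being trivial as then $G/N$ is finite. Lemma \ref{zatidura} applied to $\tilde H\supseteq N$ then gives
\begin{equation*}
\hdim_{\{S_nN/N\}}^{G/N}(H)=\frac{\hdim_{\{S_n\}}^G(\tilde H)-\hdim_{\{S_n\}}^G(N)}{1-\hdim_{\{S_n\}}^G(N)}.
\end{equation*}
By Theorem \ref{standard} both $\hdim_{\{S_n\}}^G(\tilde H)$ and $\hdim_{\{S_n\}}^G(N)$ are independent of the standard filtration of $G$, so the same holds for the left-hand side, proving (i).

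For (ii) I would produce an open $R$-standard subgroup of $G$ whose induced filtration descends to a genuine standard filtration of $G/N$. Put $d=\dim G$ and $k=\dim N$. Since $\pi$ is a submersion of $\F_p[[t]]$-analytic manifolds (this is precisely the content of the analyticity of $G/N$ in \cite[Part II, Section IV.5, Remarks 2]{Serre}), one can choose an $R$-chart $(U,\phi)$ of $1$ in $G$ adapted to $\pi$, namely with $\phi(N\cap U)=\left(\m^a\right)^{(k)}\times\{0\}^{(d-k)}$ and with $\pi$ expressed, in these coordinates and those of a chart of $G/N$, as the projection $\pr$ onto the last $d-k$ coordinates. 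As in the proof of Lemma \ref{filtrazioak tekniko}, I would shrink to an open $R$-standard subgroup $S\subseteq U$ of some level $L\geq a$ with chart $\phi|_S$ and $\phi(S)=\left(\m^L\right)^{(d)}$, so that $\phi(S\cap N)=\left(\m^L\right)^{(k)}\times\{0\}^{(d-k)}$. The homomorphism property $\pi(xy)=\pi(x)\pi(y)$ forces the last $d-k$ components of the formal group law of $S$ to depend only on the last $d-k$ coordinates of their arguments and to agree there with the formal group law of $G/N$; hence $\pr\circ\phi|_S$ factors through $S/(S\cap N)\cong SN/N$ and endows $(SN/N,\overline\phi)$ with the structure of an $R$-standard group of level $L$ and dimension $d-k$. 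Tracking levels, $\overline\phi(S_nN/N)=\pr\bigl(\left(\m^{L+n}\right)^{(d)}\bigr)=\left(\m^{L+n}\right)^{(d-k)}$, so $\{S_nN/N\}$ is exactly the standard filtration induced by $(SN/N,\overline\phi)$, whence $\hdim_{\{S_nN/N\}}^{G/N}=\sthdim$ on $G/N$.

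The main obstacle is the adapted-coordinate construction in (ii): one must select a single chart that simultaneously straightens $N$ into a coordinate subspace and realises $\pi$ as a coordinate projection, and then verify that the formal group law of $S$ respects this splitting so that the quotient coordinates genuinely carry the formal group law of $G/N$ at the correct level. Once this normal form and the level bookkeeping are in place, matching the filtrations and invoking Theorem \ref{standard} together with Lemma \ref{zatidura} is routine, and combining (i) with (ii) gives $\hdim_{\{S_nN/N\}}(H)=\sthdim(H)$ for every $H\leq_c G/N$ and every standard filtration $\{S_n\}$ of $G$.
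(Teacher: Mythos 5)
Your two-step architecture is the same as the paper's, and your step (ii) is essentially its proof: a chart of $1$ adapted to $N$ as in \cite[Part II, Section III.12]{Serre}, shrinking to an open $R$-standard subgroup $S \subseteq U$ of some level $L$ via the proof of \cite[Theorem 13.20]{DDMS}, and the level bookkeeping $\overline{\phi}(S_nN/N) = \left(\m^{L+n}\right)^{(d-k)}$ showing that $\{S_nN/N\}_{n \in \N}$ is an $\F_p[[t]]$-standard filtration of $G/N$. (The paper realises the quotient chart as $\psi = \pr \circ \phi \circ \sigma$ for a continuous section $\sigma \colon \pi(S) \to S$, well defined by adaptedness, whereas you argue that the formal group law splits along the last $d-k$ coordinates; these are equivalent mechanisms, and you are in fact more explicit than the paper about why the projected series is again a formal group law.) In both versions the lemma then follows from step (i) together with Theorem \ref{standard} applied in $G/N$.

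The genuine problem is step (i). Lemma \ref{zatidura} is stated --- and proved in \cite{KT} --- for a \emph{normal} filtration series of the ambient group: the index $|HG_n : G_n|$ and formula (\ref{hdim}) behind it presuppose $G_n \unlhd G$. But an $\F_p[[t]]$-standard filtration $\{S_n\}_{n \in \N}$ of $G$ is only guaranteed to satisfy $S_n \unlhd S$ by \cite[Proposition 13.22]{DDMS}; in general $S_n$ need not be normal in $G$, nor $S_nN/N$ in $G/N$, so you cannot apply Lemma \ref{zatidura} to $\tilde{H} = \pi^{-1}(H) \supseteq N$ inside $G$ as you do. The paper sidesteps this entirely: its step (i) is a direct metric comparison of the two induced quotient filtrations, ``as in the proof of \cite[Theorem 3.1]{FGG}''. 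Your route can be salvaged, but only after a reduction you have omitted: work inside $S$, where $\{S_n\}$ is a normal filtration, apply Lemma \ref{zatidura} to $S \cap \tilde{H} \supseteq S \cap N \unlhd S$, and then transfer the result back through the isomorphism $S/(S \cap N) \cong SN/N \leq_o G/N$ using the bi-Lipschitz and open-subgroup reductions (the unlabelled lemma preceding Corollary \ref{estandarrera} and \cite[Lemma 2.4]{FGG}). Note also a citation slip feeding into this step: the proper-limit input $\hdim^{S}_{\{S_n\}}(S \cap N) = \dim{N}/\dim{G}$ comes from \cite[Main Theorem]{FGG} (exactly as invoked in the proof of Lemma \ref{zatabel}), not from Lemma \ref{filtrazioak tekniko}, which computes dimensions \emph{inside} an analytic subgroup rather than the dimension \emph{of} it in $G$.
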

\begin{proof}
Let us fix some notation: let $R$ be the pro-$p$ domain  $\F_p[[t]]$ with maximal ideal $\mideal= (t),$ $d = \dim{G}$ and $e = \dim{G/N};$ let $\pi$ be the quotient map and let $\pr \colon \mideal^{(d)} \rightarrow \mideal^{(e)}$ be the projection onto the last $e$ coordinates.\\

Firstly, if $\{S_n\}_{n \in \N}$ and $\{T_n\}_{n \in \N}$ are two $R$-standard filtrations of $G,$ as in the proof of \cite[Theorem 3.1] {FGG} it can be seen that 
\begin{equation}
\label{stzat}
\hdim_{\left\{ \frac{S_n N}{N} \right\}} (H) = \hdim_{\left\{ \frac{T_nN}{N}\right\}} (H),  \ \forall H \leq_c G/N.
\end{equation}
Hence by (\ref{stzat}) it suffices to find an open  $R$-standard subgroup $S$ of $G$ such that $\{ S_n N/N \}_{n \in \N}$ is an $R$-standard filtration of $G/N.$ According to \cite[Part II, Section III.12]{Serre} there exists an $R$-chart $(U, \phi)$ of $1$ in $G$ adapted to $N,$ that is, $\phi(1) = \textbf{0}$ and 
$\pr \circ \phi(x) = \pr \circ \phi (y)$ if and only if $xy^{-1} \in N.$ Since $U$ is open in $G,$ from the proof of \cite[Theorem 13.20]{DDMS} there exists an open $R$-standard subgroup $S,$ of level $L,$ contained in $U$ with homeomorphism $\phi|_S.$ Let $\sigma \colon \pi(S) \rightarrow S$ be a continuous section such that $\sigma(1N) = 1$ (which exists by \cite[Proposition 2.2.2]{RZ}), then $\pi(S)$ is an $R$-standard subgroup of $G/N$, with level $L,$ dimension $e$ and homeomorphism $\psi = \pr \circ \phi \circ \sigma.$ Note that since $(U,\phi)$ is an adapted $R$-chart, the definition of $\psi$ is independent of the selected section and $\psi(S_nN/N) = \pr \circ \phi(S_n) = (\mideal^{L+n})^{(e)},$ so $\{ S_n N/ N\}_{n \in \N}$ is an $R$-standard filtration of $G/N.$ 
 \end{proof}

\section{Soluble compact $R$-analytic groups}

This section is devoted to proving Theorem \ref{T1}. 
\subsection{Abelian compact $R$-analytic groups}

Before dealing with soluble groups, we will prove the analogous result in the abelian case,  where $R$ is a general pro-$p$ domain of characteristic $p.$ We will use the following technical lemma (cf. \cite[Lemma 2.3]{FGG}).

\begin{lemma}
\label{azpikoa}
Let $(S, \phi)$ be an $R$-standard group of dimension $d$. Then there exists a non-constant polynomial $f$ such that $|S : S_{n}|  = p^{df(n)}$ for large enough $n$.
\end{lemma}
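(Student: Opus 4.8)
The plan is to reduce the group-theoretic index $|S:S_n|$ to a purely additive count in the coefficient ring $R$ and then to invoke the classical Hilbert--Samuel theory of the $\mathfrak{m}$-adic filtration. Write $N$ for the level of $(S,\phi)$, so that $\phi(S_n)=(\mathfrak{m}^{N+n})^{(d)}$. Since $S=S_0\supseteq S_1\supseteq\cdots$ is a tower of open finite-index subgroups, the index is multiplicative,
\[
|S:S_n|=\prod_{i=0}^{n-1}|S_i:S_{i+1}|,
\]
so it suffices to identify each factor. Here I would use the form (\ref{fgl}) of the formal group law: for $\mathbf{x},\mathbf{y}\in(\mathfrak{m}^{N+i})^{(d)}$ one has $\mathbf{F}(\mathbf{x},\mathbf{y})=\mathbf{x}+\mathbf{y}+\mathbf{G}(\mathbf{x},\mathbf{y})$, and since every monomial of $\mathbf{G}$ has total degree at least $2$ the correction term lies in $(\mathfrak{m}^{2(N+i)})^{(d)}\subseteq(\mathfrak{m}^{N+i+1})^{(d)}$, the inclusion holding because $N\geq 1$. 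Thus $\phi$ induces an isomorphism of $S_i/S_{i+1}$ onto the additive quotient $(\mathfrak{m}^{N+i}/\mathfrak{m}^{N+i+1})^{(d)}$, whence $|S_i:S_{i+1}|=|\mathfrak{m}^{N+i}:\mathfrak{m}^{N+i+1}|^{d}$. Telescoping the product yields
\[
|S:S_n|=|\mathfrak{m}^N:\mathfrak{m}^{N+n}|^{d}.
\]

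It then remains to analyse the exponent. Putting $f(n)=\log_p|\mathfrak{m}^N:\mathfrak{m}^{N+n}|=\log_p|R/\mathfrak{m}^{N+n}|-\log_p|R/\mathfrak{m}^{N}|$ gives exactly $|S:S_n|=p^{df(n)}$, so the whole content of the lemma is that $f$ agrees with a non-constant polynomial for large $n$. For this I would appeal to the Hilbert--Samuel theorem for the Noetherian local ring $R$ with respect to $\mathfrak{m}$: the function $j\mapsto\log_p|R/\mathfrak{m}^{j}|$, being a constant multiple (by the residue degree $\log_p|R/\mathfrak{m}|$) of the length $\ell(R/\mathfrak{m}^{j})$, coincides for large $j$ with a polynomial in $j$ of degree equal to the Krull dimension of $R$. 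Hence $f(n)$ is eventually polynomial in $n$.

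To see that $f$ is non-constant I would use that $R$ is a domain with $\mathfrak{m}\neq 0$: by Nakayama's lemma $\mathfrak{m}^{j}=\mathfrak{m}^{j+1}$ would force $\mathfrak{m}^{j}=0$, which is impossible in a domain, so the chain $\mathfrak{m}^{N}\supsetneq\mathfrak{m}^{N+1}\supsetneq\cdots$ is strictly decreasing; each index is then a positive power of $p$ and $f$ is strictly increasing. The step that has to be handled with care -- and the only genuine obstacle -- is the appeal to Hilbert--Samuel theory: one must know that a pro-$p$ domain is a complete Noetherian local ring with finite residue field, so that the quotients $R/\mathfrak{m}^{j}$ are finite and their lengths are eventually polynomial; granting that, the remaining ingredients (the telescoping and the degree-at-least-two estimate on $\mathbf{G}$) are routine.
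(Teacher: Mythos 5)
Your proposal is correct, and it is essentially the argument behind this lemma in the literature: the paper does not prove it itself but cites \cite[Lemma 2.3]{FGG}, whose proof likewise uses the formal group law to identify each quotient $S_i/S_{i+1}$ with $\left(\mathfrak{m}^{N+i}/\mathfrak{m}^{N+i+1}\right)^{(d)}$, so that $|S:S_n|=|\mathfrak{m}^{N}:\mathfrak{m}^{N+n}|^{d}$, and then invokes Hilbert--Samuel theory for the Noetherian local ring $R$ (with finite residue field of characteristic $p$) to obtain eventual polynomial growth of the exponent. Your treatment of non-constancy (strict descent of the chain $\mathfrak{m}^{N}\supsetneq\mathfrak{m}^{N+1}\supsetneq\cdots$ via Nakayama, using that $R$ is a domain with $\mathfrak{m}\neq 0$) is exactly the right justification.
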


\begin{proposition}
\label{abelian} 
Let $R$ be a pro-$p$ domain of characteristic $p$ and let $(S, \phi)$ be an abelian $R$-standard group. Then $\sthspec(S) = [0,1].$
\end{proposition}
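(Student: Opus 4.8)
The plan is to show $\sthspec(S) = [0,1]$ for an abelian $R$-standard group of dimension $d$ over a pro-$p$ domain $R$ of characteristic $p$. Since the spectrum is always contained in $[0,1]$, I only need to realise every value $\eta \in [0,1]$ as the standard Hausdorff dimension of some closed subgroup $H \leq_c S$. The decisive structural feature in characteristic $p$ is that the formal group law (\ref{fgl}) together with commutativity forces $S$ to be, up to the analytic structure, particularly close to an elementary abelian situation: the key point is that in characteristic $p$ one can extract an explicit additive description of the filtration quotients. By Lemma \ref{azpikoa}, $|S:S_n| = p^{df(n)}$ for a non-constant polynomial $f$ and large $n$, so the denominators in the Hausdorff-dimension formula (\ref{hdim}) are completely controlled.

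\emph{First I would} reduce the problem to a computation inside $\left(\mideal^N\right)^{(d)}$ transported through $\phi$. Because $S$ is abelian, the group operation is $\mathbf{F}(\mathbf{X},\mathbf{Y}) = \mathbf{X}+\mathbf{Y}+\mathbf{G}(\mathbf{X},\mathbf{Y})$ with $\mathbf{G}$ symmetric and of degree $\geq 2$. The plan is to build, for a prescribed target value $\eta$, a closed subgroup whose image under $\phi$ is ``thin'' in a prescribed fraction of the coordinate directions. Concretely, I would aim to produce subgroups $H$ for which $\log|HS_n:S_n| \big/ \log|S:S_n|$ has a controllable $\liminf$: the cleanest approach is to pick a single coordinate (say the first) and build a one-parameter closed subgroup $\overline{\langle x\rangle}$ where $x = \phi^{-1}(c t^{?},0,\dots,0)$ is chosen so that the orders $|x^{p^k}|$ grow along a prescribed subsequence, producing exactly $\eta = 1/d$; then to interpolate between the values $j/d$ for $j=0,1,\dots,d$ by taking direct factors in distinct coordinate blocks. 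The harder values — the ones strictly between consecutive $j/d$ — require deliberately arranging that the subgroup ``fills'' coordinates at different rates, so that the $\liminf$ defining $\hdim$ lands on an arbitrary real number in $[0,1]$.

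\emph{To get arbitrary real values} I would exploit the freedom in choosing a closed subgroup that is \emph{not} an $R$-analytic subgroup (indeed, the Main Theorem of \cite{FGG} guarantees analytic subgroups only give finitely many rational dimensions, so genuinely new subgroups are needed). The mechanism is the standard one used in \cite{KTZ} and \cite{KT}: take a closed subgroup generated topologically by a carefully chosen sequence of elements lying in deeper and deeper levels $S_{n_k}$, so that $|HS_n:S_n|$ grows in a prescribed, possibly irrationally-sloped staircase pattern; the $\liminf$ in (\ref{hdim}) can then be tuned to any $\eta \in [0,1]$. The characteristic-$p$ hypothesis is what makes this tractable, because $p$-th powers in $S$ correspond, modulo higher-degree corrections from $\mathbf{G}$, to multiplication by $t$-powers in the additive model, giving a clean linear-algebra picture of each quotient $S_n/S_{n+1}$ as an $\F_p$-vector space of dimension $d$.

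\emph{The main obstacle} I anticipate is controlling the nonlinear part $\mathbf{G}$ of the formal group law when forming the closure of the subgroup generated by my chosen elements: I must verify that the higher-degree terms do not disturb the intended coordinate-filling rates, i.e.\ that the leading $t$-adic valuations in each coordinate behave additively to first order. Commutativity helps by eliminating commutator contributions, but I still need a valuation estimate showing that $\phi\!\left(x^{p^k}\right)$ has the expected leading term. Once that estimate is in place, the computation of $\hdim$ via (\ref{hdim}) reduces to counting $\F_p$-dimensions of the images in each quotient, and the interpolation argument delivers the full interval $[0,1]$.
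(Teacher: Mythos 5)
There is a genuine gap, and it sits at the core of your construction. In characteristic $p$ the linear term of the formal group law annihilates $p$-th powers: if $x \in S_n$, then by (\ref{fgl})
\begin{equation*}
\phi(x^p) \;=\; p\,\phi(x) + \bigl(\textrm{terms of total degree } \geq 2\bigr) \;\equiv\; \mathbf{0} \pmod{\left(\mideal^{2n}\right)^{(d)}},
\end{equation*}
so $x^p \in S_{2n}$: taking $p$-th powers at least \emph{doubles} the depth in the filtration, rather than shifting it by a bounded amount as your heuristic ``$p$-th powers correspond to multiplication by $t$-powers'' suggests (that is the characteristic-$0$ picture, valid for $\Z_p$-analytic groups). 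Consequently the image of any procyclic subgroup $\overline{\langle x\rangle}$ in $S/S_n$ has order at most $p^{O(\log n)}$, while $|S:S_n| = p^{df(n)}$ with $f$ a non-constant polynomial (Lemma \ref{azpikoa}); hence $\sthdim\left(\overline{\langle x\rangle}\right) = 0$, and the same logarithmic bound shows that \emph{every} topologically finitely generated closed subgroup of $S$ has standard Hausdorff dimension $0$. (Concretely, $\overline{\langle 1+t\rangle} \cong \Z_p$ inside the multiplicative group $1+t\F_p[[t]]$ is exponentially distorted: it meets $S_n$ at index roughly $n$, not $p^{n}$.) So your one-parameter subgroup realising $1/d$ does not exist, and the direct factors meant to realise $j/d$ all have dimension $0$ as well; no positive value of the spectrum is reached by this part of your plan.

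What survives is your appeal to the ``staircase'' constructions of \cite{KTZ} and \cite{KT}, which is indeed the right mechanism for producing arbitrary values --- but you invoke it without verifying the hypothesis that makes it applicable, and that hypothesis is precisely the opposite of what your primary construction assumes. The paper's proof is short for exactly this reason: it quotes \cite[Theorem 5.4]{KTZ}, which asserts that if every finitely generated closed subgroup of a countably based infinite pro-$p$ group has Hausdorff dimension $0$ with respect to the given filtration, then the spectrum is all of $[0,1]$; it then verifies this hypothesis by the doubling computation above (an $r$-generated abelian quotient $H/(H\cap S_n)$ has exponent at most $p^{\lceil\log_2 n\rceil}$, hence order at most $p^{r\lceil\log_2 n\rceil}$, negligible against $p^{df(n)}$). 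In other words, the phenomenon you treat as an obstacle to be engineered around --- finitely generated subgroups being metrically negligible in characteristic $p$ --- is not an obstacle at all: it \emph{is} the proof. As written, your argument establishes membership in the spectrum only for the trivial values $0$ and $1$.
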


\begin{proof}
By \cite[Theorem 5.4]{KTZ} it suffices to prove that every finitely generated subgroup $H \leq_c S$ satisfies $\sthdim(H) = 0.$ Let $d$ be the dimension of $S$ and let $H \leq S$ be a topologically $r$-generated closed subgroup. Since the group operation in $S$ is given by a formal group law, by (\ref{fgl}) whenever $x \in S_n$ we have
$$\phi(x^p) \equiv  p \phi(x)  = \textbf{0} \mod{ \left(\mideal^{2n}\right)^{(d)}},$$
and thus $x^p \equiv 1 \pmod{S_{2n}}.$ Therefore $S_n/ S_{2n}$ is an elementary abelian $p$-group.\\

Since $S$ is abelian then $H/ (H \cap S_n)$ is an abelian $p$-group of exponent $p^e$ where $e \leq \lceil \log_2(n) \rceil.$ Moreover, since $H$ is topologically $r$-generated, it follows that $H/ (H \cap S_n)$ is $r$-generated and so
$$|H : H \cap S_n| \leq p^{er} \leq p^{ \lceil \log_2(n) \rceil r}.$$
Accordingly, by Lemma \ref{azpikoa}
$$\sthdim(H) = \liminf_{n \rightarrow \infty} \frac{\log_p{|H: H \cap S_n|}}{\log_p{|S: S_n|}}  \leq \liminf_{n \rightarrow \infty} \frac{r \lceil \log_2(n) \rceil }{df(n)} = 0,$$
as desired. 
 \end{proof}

Clearly, in view of of Corollary \ref{estandarrera}, this result can be generalised to compact abelian $R$-analytic groups.

\begin{corollary}
\label{abeldarra}
Let $R$ be a pro-$p$ domain of characteristic $p.$ If $G$ is an abelian compact $R$-analytic group, then $\sthspec(G) = [0,1].$
\end{corollary}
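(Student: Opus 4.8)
The plan is to deduce this corollary directly from the $R$-standard case already settled in Proposition \ref{abelian}, transferring the spectrum computation from an open standard subgroup to the whole group by means of Corollary \ref{estandarrera}. Since all of the genuine analytic content is carried by Proposition \ref{abelian}, I expect essentially no new work beyond checking that its hypotheses are met.

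Concretely, I would proceed in three short steps. First, invoke \cite[Theorem 13.20]{DDMS} to extract an open $R$-standard subgroup $(S, \phi)$ of the compact $R$-analytic group $G$. The only thing worth remarking is that $S$ inherits commutativity: being a subgroup of the abelian group $G$, the standard group $S$ is itself abelian. Second, since $R$ has characteristic $p$ and $(S,\phi)$ is an abelian $R$-standard group, Proposition \ref{abelian} applies verbatim and gives $\sthspec(S) = [0,1]$. Third, Corollary \ref{estandarrera} states that the $R$-standard Hausdorff spectrum is unaffected by passing from $G$ to an open $R$-standard subgroup, whence $\sthspec(G) = \sthspec(S) = [0,1]$, as required.

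I do not anticipate any real obstacle here, as the statement is little more than a repackaging of two facts proved above. The only point that even deserves comment is the reduction from $G$ to $S$, and this is precisely what Corollary \ref{estandarrera} is designed to license, resting in turn on the filtration-independence furnished by Theorem \ref{standard}; once that is cited, the conclusion is immediate.
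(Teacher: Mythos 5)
Your proposal is correct and follows exactly the paper's route: the paper also deduces the corollary from Proposition \ref{abelian} by passing to an open $R$-standard subgroup via Corollary \ref{estandarrera}, merely leaving implicit the observations (existence of the standard subgroup and inheritance of commutativity) that you spell out. No issues.
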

Furthermore, it is known that any $R$-standard group of dimension one is abelian (cf. \cite[Theorem 1.6.7]{Haz}), and we thus have the following:

\begin{corollary}
Let $R$ be a pro-$p$ domain of characteristic $p$ and let $G$ be a compact $R$-analytic group of dimension one. Then $\sthspec(G) = [0,1].$
\end{corollary}

\subsection{$\F_p[[t]]$-analytic subgroups}

Now we will mainly turn to the case when $R = \F_p[[t]].$ The main strategy to prove Theorem \ref{T1} lies in adding successive intervals to the spectrum, using the consecutive abelian quotients of a subnormal series. In fact, we have the following result. 

\begin{lemma}
\label{zatabel}
Let $G$ be a compact $\F_p[[t]]$-analytic group and let $N \unlhd K \leq G$ be $\F_p[[t]]$-analytic subgroups such that $\sthspec(K /N) = [0,1].$ Then 
$$ \left[\frac{\dim{N}}{\dim{G}} , \frac{\dim{K}}{\dim{G}} \right]   = \left[\sthdim(N) , \sthdim(K) \right] \subseteq \sthspec(G).$$
\end{lemma}

\begin{proof}
By \cite [Main Theorem]{FGG} one has $\sthdim{(H)} = \dim{H}/{\dim{G}} $ for any $\F_p[[t]]$-analytic subgroup $H$
and a closer scrutiny of its proof reveals that such dimension is given by a proper limit; so the result is straightforward from Corollary \ref{korzatidura1}, Lemma \ref{filtrazioak tekniko} and Lemma \ref{zatsta}. 
 \end{proof}
  
Thus, we shall establish a useful criterion for finding $\F_p[[t]]$-analytic subgroups of a compact $\F_p[[t]]$-analytic group. The main obstacle compared with classical Lie theory arises here: it is well known that any closed subgroup of a  real ($p$-adic) Lie group is a  real ($p$-adic) Lie subgroup; nevertheless for $R$-analytic groups, closedness is a necessary condition, but not sufficient. For example, the additive group $\F_p[[t]]$ is an $\F_p[[t]]$-standard group and $\F_p\!\left[\!\left[t^2 \right]\!\right]$ is a closed subgroup with its own $\F_p[[t]]$-standard group structure. However, those manifold structures are not compatible and $\F_p\!\left[\!\left[t^2 \right]\!\right]$ is not an $\F_p[[t]]$-analytic subgroup of $\F_p[[t]].$

\begin{remark}
Denote by $ \F_p(\!(t)\!)$ the local field of characteristic $p$ and valuation ring $\F_p[[t]]$, and let $M$ be an $\F_p[[t]]$-analytic manifold. Since $\F_p[[t]]$ is a discrete valuation ring, $M$ has also an $\F_p(\!(t)\!)$-analytic manifold structure  (cf. \cite[Section 13.1]{DDMS}). 
\end{remark}
 
The task of finding $\F_p(\!(t)\!)$-analytic subgroups will be carried out by using a generalisation from \cite{JaKl} which shows that homogeneous subsets have a manifold structure over the local field $\F_p(\!(t)\!).$ According to the  definition therein (cf. \cite[Section 4]{JaKl}) a set $X \subseteq M$ is an \emph{analytic subset} if for each $x \in X$ there exists an open neighbourhood $U$ of $x$ and some $\F_{p}(\!(t)\!)$-analytic functions $f_1, \dots, f_r$ defined on $U$ (for some $r  = r_x$) such that 
$$X \cap U = \{ y \in U \mid f_i (y) = 0 \ \forall i =1, \dots, r \}.$$
In other words, an analytic subset is locally the nullset of some analytic functions. We then have (cf. \cite[Corollary 4.2]{JaKl}):

\begin{theorem}
\label{subset}
Let $G$ be an $\F_p[[t]]$-analytic group and let $H$ be both a subgroup of $G$ and an analytic subset of $G.$ Then $H$ is an $\F_p[[t]]$-analytic subgroup of $G.$
\end{theorem}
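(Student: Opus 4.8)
The plan is to promote the subgroup $H$ to a submanifold by invoking homogeneity, and then to descend the resulting structure from the field $\F_p(\!(t)\!)$ to the ring $\F_p[[t]]$; throughout I regard $G$ as an $\F_p(\!(t)\!)$-analytic manifold as in the remark above. First I would verify that $H$ is a \emph{homogeneous} analytic subset. For each $h \in H$ the left translation $L_h \colon G \to G$, $x \mapsto hx$, is an $\F_p[[t]]$-analytic automorphism of $G$ — its inverse being $L_{h^{-1}}$ — and hence also an $\F_p(\!(t)\!)$-analytic automorphism. Because $H$ is a subgroup we have $L_h(H) = H$ for every $h \in H$, and the family $\{L_h \mid h \in H\}$ acts transitively on $H$. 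Thus any point of $H$ can be moved to any other by an $\F_p(\!(t)\!)$-analytic automorphism of $G$ preserving $H$, which is exactly the homogeneity hypothesis of \cite[Corollary 4.2]{JaKl}.

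Applying that corollary, $H$ becomes an $\F_p(\!(t)\!)$-analytic submanifold of $G$, of some dimension $k \leq d := \dim G$. Concretely, near the identity there is an $\F_p(\!(t)\!)$-chart $(U, \phi)$ of $G$, compatible with the $\F_p(\!(t)\!)$-atlas induced by the $\F_p[[t]]$-structure, with $\phi(1) = \mathbf{0}$ and
$$\phi(H \cap U) = \phi(U) \cap \left( \F_p(\!(t)\!)^{(k)} \times \{0\}^{(d-k)} \right),$$
and by homogeneity the analogous statement holds around every point of $H$. It therefore remains only to upgrade these adapted charts to $\F_p[[t]]$-analytic ones, and this last step is the heart of the matter, the one I expect to be the main obstacle.

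For the descent I would fix a standard $\F_p[[t]]$-chart $\psi \colon V \to (\mideal^N)^{(d)}$ of $G$ at $1$ and examine the $\F_p(\!(t)\!)$-analytic submanifold $\psi(H \cap V)$ of the open set $(\mideal^N)^{(d)}$, which near $\mathbf{0}$ is cut out by $\F_p(\!(t)\!)$-analytic functions with linearly independent differentials. Shrinking to a small standard ball $(\mideal^{N'})^{(d)}$ with $N'$ large, the convergence of the defining power series forces their coefficients to become integral; after normalising those functions by suitable powers of $t$ so that their differentials at $\mathbf{0}$ are primitive with respect to the lattice $\F_p[[t]]^{(d)}$, they are $\F_p[[t]]$-analytic and the $\F_p[[t]]$-implicit function theorem applies. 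This exhibits $\psi(H \cap V)$, and hence $H$, as an $\F_p[[t]]$-analytic submanifold in the sense of \cite[Part II, Section III.11]{Serre}; being a subgroup, $H$ is then an $\F_p[[t]]$-analytic subgroup of $G$. The delicate point — the announced obstacle — is precisely the interplay between the $\F_p(\!(t)\!)$-analytic data and the standard lattice: one must simultaneously make the adapting maps and their inverses $\F_p[[t]]$-analytic while keeping the relevant Jacobian in $\GL_d(\F_p[[t]])$, so that the adapted coordinate form survives the passage from the field to the ring.
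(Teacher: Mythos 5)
The paper offers no proof of Theorem \ref{subset} at all: it is quoted directly from \cite[Corollary 4.2]{JaKl}, with the identification of $\F_p[[t]]$- and $\F_p(\!(t)\!)$-structures left to the preceding Remark. Your argument is correct in substance and is essentially a reconstruction of the proof behind that citation: verifying homogeneity through left translations is exactly how the subgroup statement is deduced from the homogeneity theorem of \cite[Section 4]{JaKl}, and your descent from the field to the ring is the standard equivalence between local $\F_p(\!(t)\!)$-analyticity and local $\F_p[[t]]$-analyticity over a discrete valuation ring. One flaw to repair: as written, your appeal to \cite[Corollary 4.2]{JaKl} is circular, since that corollary \emph{is} (up to the field/ring identification) the statement being proved; the result you actually use --- a homogeneous analytic subset of an analytic manifold over a local field is an analytic submanifold --- is the theorem from which Corollary 4.2 is derived in \cite{JaKl}, and your citation should point there instead. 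As for the step you single out as the main obstacle, it does go through: a power series over $\F_p(\!(t)\!)$ converging on a compact standard ball and taking values in $\F_p[[t]]$ acquires coefficients in $\F_p[[t]]$ after the substitution $x \mapsto t^{N'}y$ with $N'$ large, and the Jacobian normalisation you mention is achieved by a Smith-normal-form coordinate change in $\GL_d(\F_p[[t]])$ followed by one further such rescaling. Alternatively, you can bypass the implicit function theorem over the ring altogether: restrict the adapted $\F_p(\!(t)\!)$-chart to a small standard ball and observe that its transition maps with the $\F_p[[t]]$-atlas, being $\F_p(\!(t)\!)$-analytic maps between subsets of $\F_p[[t]]^{(d)}$ with values in $\F_p[[t]]^{(d)}$, become $\F_p[[t]]$-analytic after the same rescaling; this yields the adapted $\F_p[[t]]$-charts in Serre's sense directly.
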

Let us see some examples of applications of the preceding lemma:

\begin{corollary}
\label{centre+centraliser}
Let $G$ be an $\F_p[[t]]$-standard group and  $a$ in $G.$ Then $Z(G)$ and $C_G(a)$ are $\F_p[[t]]$-analytic subgroups.
\end{corollary}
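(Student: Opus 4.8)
The plan is to apply Theorem \ref{subset}, so the entire task reduces to exhibiting $Z(G)$ and $C_G(a)$ as analytic subsets of $G$, i.e. as local nullsets of finitely many $\F_p(\!(t)\!)$-analytic functions. Since both are already subgroups, once the analytic-subset property is established the theorem immediately upgrades them to $\F_p[[t]]$-analytic subgroups. Because $G$ is $\F_p[[t]]$-standard, we have a global chart $\phi \colon G \to (\mideal^N)^{(d)}$ with $\phi(1) = \mathbf{0}$, and the group law is given by a formal group law $\mathbf{F} \in R[[\mathbf{X}, \mathbf{Y}]]^{(d)}$ over $R$; in particular multiplication and inversion are globally given by convergent power series in the coordinates. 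This is the feature I would exploit.

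First I would treat the centraliser $C_G(a)$, which is the simpler case. Fixing $a \in G$, consider the function that sends $x \in G$ to the coordinate vector of the commutator $x^{-1}a^{-1}xa$, or equivalently the difference $\phi(xa) - \phi(ax)$. Writing this in terms of the formal group law, it is $\mathbf{F}(\phi(x), \phi(a)) - \mathbf{F}(\phi(a), \phi(x))$, which is a $d$-tuple of $\F_p(\!(t)\!)$-analytic functions $f_1, \dots, f_d$ in the coordinates of $x$ (the coordinates of $a$ being fixed constants in $R$). An element $x$ centralises $a$ precisely when $xa = ax$, which by injectivity of $\phi$ is equivalent to the vanishing of all the $f_i$. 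Hence $C_G(a) = \{ x \in G \mid f_i(x) = 0 \ \forall i \}$ is globally (and a fortiori locally) the nullset of analytic functions, so it is an analytic subset.

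For the centre $Z(G)$ I would reduce to the centraliser case. The subtlety is that $Z(G) = \bigcap_{a \in G} C_G(a)$ is an intersection over a continuum of elements, whereas the definition of analytic subset requires only \emph{finitely} many defining functions at each point. The clean way around this is topological generation: since $G$ is a finitely generated pro-$p$ group (being $R$-standard, cf. \cite[Proposition 13.22]{DDMS}), it is topologically generated by finitely many elements $a_1, \dots, a_m$, and by continuity of multiplication $Z(G) = \bigcap_{j=1}^{m} C_G(a_j)$. As each $C_G(a_j)$ is the nullset of a finite family of analytic functions from the previous paragraph, their intersection $Z(G)$ is the common nullset of the union of these finitely many families, hence again an analytic subset of $G$.

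The main obstacle is the intersection issue for $Z(G)$: one must justify replacing the defining condition ``commutes with every element'' by ``commutes with each of finitely many topological generators,'' which is exactly where finite topological generation of $R$-standard groups is used, together with the continuity of the group operations to pass from generators to the closure. Aside from that, the only routine checks are that the coordinate expressions $\mathbf{F}(\phi(x), \phi(a)) - \mathbf{F}(\phi(a), \phi(x))$ genuinely define $\F_p(\!(t)\!)$-analytic functions on $G$ — which is immediate since $\mathbf{F}$ is a power series over $R \subseteq \F_p(\!(t)\!)$ and $\phi$ is the standard chart — and that vanishing of these functions is equivalent to the group-theoretic condition, which follows from injectivity of $\phi$. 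Everything then funnels through Theorem \ref{subset}.
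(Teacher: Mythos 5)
Your treatment of the centraliser is correct and is essentially the paper's own argument: the paper likewise uses the formal group law to write the coordinates of $y^{-1}ay$ as $a_i + h_i(y)$ with $h_i$ given by power series over $\F_p[[t]]$, and concludes via Theorem \ref{subset}; your functions $\mathbf{F}(\phi(x),\phi(a)) - \mathbf{F}(\phi(a),\phi(x))$ amount to the same thing.

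The centre, however, is where your proof has a genuine gap. Your reduction $Z(G) = \bigcap_{j=1}^{m} C_G(a_j)$ rests on the claim that an $\F_p[[t]]$-standard group is a \emph{finitely generated} pro-$p$ group, and this is false: \cite[Proposition 13.22]{DDMS} gives only that standard groups are pro-$p$, not that they are topologically finitely generated, and in characteristic $p$ they essentially never are. The additive group $\F_p[[t]]$ (an $\F_p[[t]]$-standard group of dimension $1$) is an infinite elementary abelian pro-$p$ group, so $\Phi(G)=1$ and finite topological generation would force the group to be finite. Indeed, the paper's own Proposition \ref{abelian} shows every finitely generated closed subgroup of such a group has standard Hausdorff dimension $0$, so the full group cannot be finitely generated; the intuition you are importing holds for $R=\Z_p$ (uniform groups) but breaks precisely in the characteristic-$p$ setting this paper works in. The paper sidesteps the issue by citing \cite[Corollary 4.3]{JaKl} for $Z(G)$. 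If you want a self-contained repair, the right tool is Noetherianity rather than finite generation: $Z(G)$ is the common nullset of the infinite family of analytic functions $\bigl\{h_i^{(a)} \mid a \in G,\ 1 \le i \le d\bigr\}$ coming from all centralisers, and since the power series ring $\F_p[[t]][[X_1,\dots,X_d]]$ is Noetherian, the ideal these functions generate is generated by finitely many of its elements $g_1,\dots,g_r$; the common nullset of the ideal equals $\bigcap_{j=1}^{r}\{x \mid g_j(x)=0\}$, exhibiting $Z(G)$ as an analytic subset with finitely many defining functions, after which Theorem \ref{subset} applies as you intended.
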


\begin{proof}
By the previous theorem it is enough to show that $Z(G)$ and $C_G(a)$ are analytic subsets. The former is proved in \cite[Corollary 4.3]{JaKl}, while the latter follows the same spirit. 
Indeed, since $G$ is $\F_p[[t]]$-standard of level say $N$ and dimension say $d,$ then it can be identified with $\left(t^N \right)^{(d)}.$ Hence, for every $x \in G,$ one has that $\F_p[[t]][[X_1, \dots, X_d ]]$ is a subring of the local ring of functions at $x.$ For $(\alpha_1, \dots, \alpha_d) \in \N^{(d)}$ define $|\alpha| = \sum_{i=1}^{d} \alpha_i.$ Since the group operation is given by a formal group law, by (\ref{fgl}) there exist some $g_{i,\alpha} \in \F_p[[t]][[X_1, \dots, X_d]]$ such that 
$$\pi_i\left(y^{-1} a y \right) =  a_i + \sum_{|\alpha| \geq 1} g_{i, \alpha}(a) y_1^{\alpha_1} \dots y_d^{\alpha_d} = a_i + h_i(y),$$
for all $y$ in $G,$ where the map $\pi_i \colon \left(t^N \right)^{(d)} \rightarrow \left(t^N \right)$ is the projection to the $i$th coordinate. Then the maps $h_i(y) = \sum_{|\alpha| \geq 1} g_{i, \alpha}(a) y_1^{\alpha_1} \dots y_d^{\alpha_d} $ are clearly $\F_p[[t]]$-analytic. Therefore 
\begin{align*}
C_G(a) &= 
\left\{ y \in G \mid \pi_i\left(y^{-1} a y\right) = a_i  \ \forall i=1, \dots, d \right\} \\& =
\left\{ y \in G \mid  h_i(y) = 0 \  \forall i = 1, \dots, d\right\},
\end{align*}
and  $C_G(a)$ is an analytic subset. 
 \end{proof}

\begin{corollary}
\label{algebraic}
Let $G \subseteq \GL_n(\F_p[[t]])$ be a linear $\F_p[[t]]$-analytic group and let $\Hcal$ be a Zariski closed subgroup of  $\GL_n(\F_p[[t]]).$ Then $\Hcal  \cap G$ is an $\F_p[[t]]$-analytic subgroup of $G.$ 
\end{corollary}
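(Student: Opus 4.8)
The plan is to deduce this immediately from Theorem~\ref{subset}, so that it suffices to check that $\Hcal \cap G$ is simultaneously a subgroup of $G$ and an analytic subset of $G$. The first requirement is automatic: $\Hcal \cap G$ is the intersection of two subgroups of $\GL_n(\F_p[[t]])$. Hence the entire content of the argument lies in exhibiting $\Hcal \cap G$, locally around each of its points, as the common nullset of some $\F_p(\!(t)\!)$-analytic functions on $G$.

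To this end I would recall that the coordinate ring of $\GL_n$ is generated by the matrix-entry functions $X_{ij}$ together with $\det^{-1}$; thus, being Zariski closed, $\Hcal$ is the common zero locus in $\GL_n(\F_p[[t]])$ of finitely many regular functions, that is, of polynomials $P_1, \dots, P_m$ in the $X_{ij}$ and in $\det^{-1}$. The key observation is that each $P_k$ is an $\F_p[[t]]$-analytic function on $\GL_n(\F_p[[t]])$: the entry functions $X_{ij}$ are restrictions of the coordinate functions on $\Mat_n(\F_p[[t]]) \cong (\F_p[[t]])^{(n^2)}$ and so are analytic, while $\det$ is a polynomial in the $X_{ij}$ that takes values in $\F_p[[t]]^{\times}$ everywhere on $\GL_n(\F_p[[t]])$, whence its reciprocal $\det^{-1}$ is analytic too. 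Since $G$ carries the $\F_p[[t]]$-analytic structure inherited as a subgroup, the restrictions $P_1|_G, \dots, P_m|_G$ are $\F_p[[t]]$-analytic, and in particular $\F_p(\!(t)\!)$-analytic, functions on $G$.

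With this in hand the conclusion is formal. One has
$$\Hcal \cap G = \{ y \in G \mid P_k(y) = 0 \ \forall k = 1, \dots, m \},$$
which presents $\Hcal \cap G$ globally, and therefore a fortiori locally at each of its points, as the nullset of finitely many $\F_p(\!(t)\!)$-analytic functions. Hence $\Hcal \cap G$ is an analytic subset of $G$ in the sense of the definition preceding Theorem~\ref{subset}, and that theorem then yields at once that $\Hcal \cap G$ is an $\F_p[[t]]$-analytic subgroup of $G$.

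The only point I expect to require genuine care, rather than bookkeeping, is the analyticity of $\det^{-1}$; everything else is bookkeeping once Theorem~\ref{subset} is available. This reduces to the standard fact that the reciprocal of a nowhere-vanishing analytic function is again analytic, which holds here because at every point of $\GL_n(\F_p[[t]])$ the local expansion of $\det$ has a unit constant term (as $\det$ takes values in $\F_p[[t]]^{\times}$) and may therefore be inverted as a power series.
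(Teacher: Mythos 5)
Your proposal is correct and takes essentially the same route as the paper: both reduce to Theorem~\ref{subset} by writing $\Hcal \cap G$ as the common zero set of finitely many functions that are analytic on $G$, with finiteness coming from Noetherianity. The only difference is that the paper cuts out $\Hcal$ using polynomials in the matrix entries alone (elements of $\F_p[[t]][\mathbf{X}]$), which are trivially analytic, so your careful treatment of $\det^{-1}$ --- correct as it stands --- can be avoided entirely: on $\GL_n(\F_p[[t]])$ the zero locus of a regular function $P/\det^k$ coincides with that of the polynomial $P$, so a Zariski closed subgroup is already the trace of an affine zero locus.
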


\begin{proof}
 Since $\mathcal{H}$ is closed in the Zariski topology it is an affine set, that is, there exists a subset $I$ of $\F_p[[t]][\mathbf{X}],$ where $\mathbf{X}$ is a tuple of $n^2$ variables, such that
$$\mathcal{H}  =\left \{ y \in \GL_n(\F_p[[t]]) \mid f_{i}(y) = 0 \ \forall f_i \in I \right\}.$$
 But since $\F_p[[t]][\mathbf{X}]$ is Noetherian we can assume $I$ to be finite, and thus
$$\mathcal{H} \cap G = \left\{ y \in G \mid f_i(y) =0 \ \forall f_i \in I \right \}$$
is an analytic subset, so it is an $\F_p[[t]]$-analytic subgroup by Theorem \ref{subset}. 
 \end{proof}

We are now in position to prove the main theorem by using the previous results:

\begin{proof}[Proof of Theorem \ref{T1}]
By Corollary \ref{estandarrera} we can assume without loss of generality that $G$ is $\F_p[[t]]$-standard. We first prove the theorem for the case when $G$ is linear over $\F_p[[t]],$ that is, $G \subseteq \GL_n(\F_p[[t]]).$  Let  $\mathcal{G}$ be the Zariski closure of $G$ in $\GL_n(\F_p[[t]]).$ According to \cite[Theorem 5.11]{Weh} $\mathcal{G}$ is a soluble algebraic group, so there exists a soluble series 
$$\mathcal{G} = \mathcal{H}_1 \unrhd \mathcal{H}_2 \unrhd \dots \unrhd \mathcal{H}_{k-1} \unrhd \mathcal{H}_{k} = \{ 1 \}$$
of Zariski closed subgroups. Then 
$$G = \mathcal{H}_1 \cap G \unrhd \mathcal{H}_2 \cap G \unrhd \dots \unrhd \mathcal{H}_{k-1} \cap G \unrhd \mathcal{H}_{k}  \cap G= \{ 1 \}$$
is a soluble series of $G$ given by $\F_p[[t]]$-analytic subgroups, by Corollary \ref{algebraic}. \\

Denote $H_i = \mathcal{H}_i \cap G.$ Since each $H_i$ is an $\F_p[[t]]$-analytic subgroup of $G$ then $H_{i-1} / H_{i}$  is a compact abelian $\F_p[[t]]$-analytic group for all $i \in \{2, \dots, k \}$, so by Corollary \ref{abeldarra} it follows that $\sthspec(H_i / H_{i-1}) = [0,1].$ Hence by Lemma \ref{zatabel}  one has $[\sthdim(H_{i}), \sthdim(H_{i-1})] \subseteq \sthspec(G)$ for all $i \in \{2, \dots, k\}$ and thus $\sthspec(G) = [0,1].$\\
Let us finally turn to the general case. By Corollary \ref{centre+centraliser}, $Z(G)$ is an abelian $\F_p[[t]]$-analytic subgroup of $G$ and thus by Corollary \ref{abeldarra} and Lemma \ref{zatabel} 
$$[0, \sthdim{Z(G)}] \subseteq \sthspec(G).$$ 
Moreover, by  \cite[Proposition 5.1] {Jai}  one has that $G/Z(G)$ is a compact  soluble linear $\F_p[[t]]$-analytic group. Hence, according to Lemma \ref{filtrazioak tekniko} and Lemma \ref{zatsta}, 
$$ \hspec_{\left\{S_n Z(G) / Z(G)\right\}}(G/Z(G)) = \sthspec{(G/Z(G))}  = [0, 1],$$ 
and so by Corollary \ref{korzatidura1}
$$\left[\sthdim{Z(G)}, 1 \right]   \subseteq \sthspec(G),$$
thus obtaining the whole interval in the spectrum. 
\end{proof}

\section{Compact $\F_p[[t]]$-analytic groups}
In this section, we first prove Theorem \ref{T2} and subsequently we study the Hausdorff spectrum of some classical Chevalley groups. 
\subsection{Proof of Theorem \ref{T2}}

The previous section suggests that a suitable way {to find} an interval in the $\F_p[[t]]$-standard Hausdorff spectrum of a compact $\F_p[[t]]$-analytic group $G$ is looking for a soluble $\F_p[[t]]$-analytic subgroup. This search will rely heavily on the topological analogue of the Tits alternative. But we first observe the following: 
\begin{lemma}
\label{interval1}
Let $G$ be an $\F_p[[t]]$-standard group. Suppose that either
\begin{itemize}
\item[\textup{(i)}] $Z(G)$ is infinite or 
\item[\textup{(ii)}] $G$ contains an element $x$ of infinite order.
\end{itemize}
 Then $\left[0, \sfrac{1}{\dim{G}} \right] \subseteq \hspec(G).$
\end{lemma}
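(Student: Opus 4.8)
The plan is to produce, in each of the two cases, an infinite abelian $\F_p[[t]]$-analytic subgroup $K$ of $G$ and then feed it into the machinery already developed. Once such a $K$ is found, being an $\F_p[[t]]$-analytic subgroup of the compact group $G$ it is itself a compact abelian $\F_p[[t]]$-analytic group, so Corollary \ref{abeldarra} gives $\sthspec(K) = [0,1]$; since $K$ is infinite it has $\dim K \geq 1$ (a compact $\F_p[[t]]$-analytic group of dimension $0$ is finite). Applying Lemma \ref{zatabel} to the chain $\{1\} \unlhd K \leq G$ then yields $[0, 1/\dim G] \subseteq [0, \dim K /\dim G] \subseteq \sthspec(G)$, which is exactly the asserted inclusion (I read the $\hspec(G)$ of the statement as the standard spectrum $\sthspec(G)$, consistently with this section). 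So it suffices to exhibit $K$ in each case.

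For case (i) this is immediate: I would take $K = Z(G)$, which is abelian, infinite by hypothesis, and an $\F_p[[t]]$-analytic subgroup by Corollary \ref{centre+centraliser}.

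For case (ii) I would build $K$ from the element $x$ of infinite order. Set $C = C_G(x)$, which is an $\F_p[[t]]$-analytic subgroup by Corollary \ref{centre+centraliser}, and take $K = C_G(C)$, the centralizer of the whole set $C$. Two small observations make this work: first, since $x \in C$, any element centralising all of $C$ centralises $x$ and so lies in $C$, whence $C_G(C) \subseteq C$ and therefore $K = C_G(C) = Z(C)$ is abelian; and second, $x$ centralises $C = C_G(x)$ by definition, so $x \in K$ and $K$ is infinite.

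The main obstacle is verifying that $K = C_G(C)$ is genuinely an $\F_p[[t]]$-analytic subgroup, since $C$ is an infinite set whereas Corollary \ref{centre+centraliser} is phrased for the centralizer of a single element. I expect to handle this by the computation underlying that corollary: for each $a \in C$ the condition of centralising $a$ is the vanishing of the analytic functions $h^{(a)}_i$ of its proof, so $C_G(C) = \bigcap_{a \in C} C_G(a)$ is the common nullset of the family $\{ h^{(a)}_i \}$; because the power series ring $\F_p[[t]][[X_1, \dots, X_d]]$ is Noetherian, finitely many of these functions already cut out $C_G(C)$, so it is an analytic subset, and hence an $\F_p[[t]]$-analytic subgroup by Theorem \ref{subset}. (This is precisely the reduction that makes the center case \cite[Corollary 4.3]{JaKl} work.) With $K$ in hand the conclusion follows from the reduction described in the first paragraph.
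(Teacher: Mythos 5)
Your proposal is correct and follows essentially the same route as the paper: in case (i) it takes $Z(G)$, in case (ii) the same group $Z(C_G(x)) = C_G(C_G(x))$, and then concludes via Proposition \ref{abelian}/Corollary \ref{abeldarra} together with Lemma \ref{zatabel}. Your Noetherian argument showing that $C_G(C_G(x))$ is an analytic subset of $G$ is simply a careful spelling-out of the step the paper dispatches with ``similarly'' (it is the same reduction that underlies \cite[Corollary 4.3]{JaKl}), and it is sound.
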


\begin{proof}
Under the first hypothesis, by Corollary \ref{centre+centraliser} $Z(G)$ is an abelian infinite $\F_p[[t]]$-analytic subgroup. Similarly, under the second hypothesis $Z(C_G(x))$ is an abelian $\F_p[[t]]$-analytic subgroup which is infinite, because $\langle x \rangle \leq Z(C_G(x)).$ In both cases, since $G$ is compact, there exists an abelian $\F_p[[t]]$-analytic subgroup of positive dimension whose $\F_p[[t]]$-standard spectrum is the whole interval $[0,1],$ according to Proposition \ref{abelian}, thus the result follows by Lemma \ref{zatabel}. 
 \end{proof}

\begin{proof}[Proof of Theorem \ref{T2}]
First, observe that when $Z(G)$ is infinite the result follows by Lemma \ref{interval1}(i), so we shall deal with the case when $Z(G)$ is finite. But then $G/Z(G)$ is an $\F_p[[t]]$-analytic group of dimension $\dim{G}$ and according to Corollary \ref{zatidura2} it follows that
$$\sthspec(G) = \sthspec(G/Z(G)).$$
Furthermore, using \cite[Proposition 5.1]{Jai} we have that $G/ Z(G)$ is an $\F_p(\!(t)\!)$-analytic group which is linear over $\F_p(\!(t)\!).$ Hence by the topological Tits alternative (cf. \cite[Theorem 1.3]{BG}) it follows that $G/Z(G)$ contains either an open soluble subgroup, say $H,$ or contains a dense free subgroup. In the former case, $H$ is a soluble $\F_p[[t]]$-analytic group of dimension $\dim{G/ Z(G)} = \dim{G}$ and thus 
$$ \sthspec(G/ Z(G)) =  \left[ 0, 1 \right].$$
In the latter case $G/Z(G)$ contains an element of infinite order and the statement follows from Lemma \ref{interval1}(ii). 
\end{proof}

\subsection{Classical Chevalley groups}

Even though the previous result ensures the existence of a real interval of type $[0, \alpha]$ in the standard Hausdorff spectrum,  there is no general method to find the maximum value of $\alpha.$ 

However, when $G \subseteq \GL_n(\F_p[[t]])$ is linear over $\F_p[[t]]$ we can use the theory of algebraic groups. Indeed, the \emph{Borel subgroup} of $\GL_n(\F_p[[t]])$ -- i.e. a maximal connected soluble algebraic subgroup of $\GL_n(\F_p[[t]]),$ which is unique up to conjugation in $\GL_n(\F_p[[t]])$ (cf. \cite[Theorem 21.3]{Hump}) -- is the set of invertible $n \times n$ upper triangular matrices, say $\B = T_n(\F_p[[t]]).$ Since $\B$ is an algebraic group, by Corollary \ref{algebraic} it follows that $\B(G) := \B \cap G$ is a soluble $\F_p[[t]]$-analytic subgroup of $G.$  In particular, we can use this fact in order to describe the Hausdorff spectrum of the classical Chevalley groups with coefficients in the ring $\F_p[[t]].$ 

The Chevalley group over $\F_p[[t]]$ associated to a root system of type $A_n$ ($n \geq 1$)  is $\SL_{n+1}(\F_p[[t]]),$ referred to as the \emph{special linear} group. It is well known that $\SL_n(\F_p[[t]])$ is a compact $\F_p[[t]]$-analytic group of dimension $n^2-1$, containing as an open subgroup the $\F_p[[t]]$-standard group 
$$\SL_n^1 (\F_p[[t]]):= \ker\bigl\{ \SL_n(\F_p[[t]])  \longrightarrow \SL_n\left(\F_p[[t]]/ t \F_p[[t]] \right)\bigr\}$$ 
(cf. \cite[Exercise 13.9]{DDMS}).  For this first classical group we recover the following description of its $\F_p[[t]]$-standard spectrum, already proved in \cite{BSh} for the congruence subgroup filtration of $\SL_n(\F_p[[t]])$ --- which is indeed an $\F_p[[t]]$-standard filtration of $\SL_n(\F_p[[t]]).$

\begin{corollary}\textup(\textup{cf. \cite[Proposition 4.4]{BSh}}\textup)
\label{SL}
The $\F_p[[t]]$-standard Hausdorff spectrum of $\SL_n(\F_p[[t]])$ contains the real interval $\left[0, \frac{n(n+1) -2}{2n^ 2 -2}\right].$
\end{corollary}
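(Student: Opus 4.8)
The plan is to specialise the general machinery of Lemma \ref{zatabel} to the Borel subgroup of $\SL_n(\F_p[[t]])$. Write $G = \SL_n(\F_p[[t]])$, which is a compact $\F_p[[t]]$-analytic group of dimension $n^2-1$. I would take $K = \B(G) = \B \cap G$, the intersection of $G$ with the Borel subgroup $\B = T_n(\F_p[[t]])$ of $\GL_n(\F_p[[t]])$; concretely $\B(G)$ is the group of upper triangular matrices of determinant $1$. By Corollary \ref{algebraic} it is an $\F_p[[t]]$-analytic subgroup of $G$, and being upper triangular it is soluble.

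Since $\B(G)$ is a soluble compact $\F_p[[t]]$-analytic group, Theorem \ref{T1} yields $\sthspec(\B(G)) = [0,1]$. I would then apply Lemma \ref{zatabel} with the trivial normal subgroup $N = \{1\}$ and $K = \B(G)$: the hypotheses hold because $K/N \cong \B(G)$ has full spectrum, and the conclusion gives immediately
$$\left[0, \frac{\dim \B(G)}{\dim G}\right] = \left[\sthdim(N), \sthdim(\B(G))\right] \subseteq \sthspec(G).$$

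It then remains only to compute $\dim \B(G)$. The strictly upper triangular entries contribute $\binom{n}{2} = \frac{n(n-1)}{2}$ coordinates, while the determinant-one condition leaves $n-1$ free diagonal entries, so
$$\dim \B(G) = \frac{n(n-1)}{2} + (n-1) = \frac{(n-1)(n+2)}{2} = \frac{n(n+1)-2}{2}.$$
Dividing by $\dim G = n^2-1$ gives $\sthdim(\B(G)) = \frac{n(n+1)-2}{2n^2-2}$, which is exactly the right endpoint of the claimed interval.

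I do not expect a genuine obstacle here: the statement is a direct corollary of the theory already developed, with the only substantive content being the elementary dimension count of the Borel subgroup. The single point worth double-checking is that $\B(G)$ is the Borel of $\SL_n$ rather than of $\GL_n$ --- that is, that imposing determinant $1$ reduces the dimension of the full upper triangular group by exactly one --- but this is immediate, and it is precisely what produces the $-2$ in the numerator and the $-2$ in the denominator of the endpoint.
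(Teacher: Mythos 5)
Your proposal is correct and matches the paper's own proof essentially verbatim: the paper likewise intersects $G$ with the Borel subgroup $\B = T_n(\F_p[[t]])$, notes that $\B(G)$ is a soluble $\F_p[[t]]$-analytic subgroup of dimension $\frac{n(n+1)}{2}-1$, and concludes by Theorem \ref{T1} and Lemma \ref{zatabel}. Your only additions --- making the choice $N = \{1\}$ explicit in Lemma \ref{zatabel} and spelling out the dimension count --- are details the paper leaves implicit, and both are handled correctly.
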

\begin{proof}  
Note that $\B(\SL_n(\F_p[[t]])) = \B \cap \SL_n(\F_p[[t]])$ is the soluble subgroup of upper triangular matrices with determinant $1$ and entries in $\F_p[[t]],$ which is an $\F_p[[t]]$-analytic subgroup of dimension $\frac{n(n+1)}{2}-1.$ The result follows by Theorem \ref{T1} and Lemma \ref{zatabel}. 
 \end{proof}

 This method can be also used with the remaining classical Chevalley groups over a general pro-$p$ domain $R$.
\begin{itemize}
\item A root system of type $B_n$ $(n \geq 2)$ defines the odd \emph{special orthogonal  group}  
$$\SO_{2n +1}(R) := \left\{ A \in \Mat_{2n+1}(R) \ \middle| \ A^t K_{2n+1} A = K_{2n+1} \right\},$$
where $K_n = \begin{pmatrix}
0 & \dots & 0 & 1\\
0& \dots & 1 & 0 \\
\vdots  & \reflectbox{$\ddots$}  & \vdots & \vdots  \\
1 & \dots & 0 & 0 
\end{pmatrix} \in \Mat_n(R),$ which is an $R$-analytic group of dimension $n(2n+1).$
\item A root system of type $C_n $ $(n \geq 3)$ defines the \emph{symplectic group} 
$$\Sp_{2n}(R) := \left\{ A \in \Mat_{2n}(R) \mid A^t J_{2n} A = J_{2n} \right\},$$
where $J_{2n} = \begin{pmatrix}
0 & K_n \\
- K_n & 0 
\end{pmatrix},$  which is an $R$-analytic group of dimension $n(2n+1).$

\item A root system of type $D_n $ ($n \geq 4$) defines the even \emph{special orthogonal group}
$$\SO_{2n}(R) := \left\{ A \in \Mat_{2n}(R) \mid A^t K_{2n} A = K_{2n} \right\},$$
which is $R$-analytic of dimension $n(2n-1).$
\end{itemize}

Again, all these groups contain an $R$-standard subgroup of the same dimension (cf. \cite[Exercise 13.11]{DDMS}), say $S.$ Then $S$ is an open $R$-standard subgroup, and since according to the following lemma they are compact $R$-analytic groups it follows that they are in fact profinite groups.  

\begin{lemma}
Let $R$ be a pro-$p$ domain. Then $\SO_n(R)$ and $\Sp_{2n}(R)$ are compact topological spaces. 
\end{lemma}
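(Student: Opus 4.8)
The plan is to show that $\SO_n(R)$ and $\Sp_{2n}(R)$ are closed subsets of a compact space, whence they inherit compactness. The natural ambient compact space is $\Mat_n(R)$ (respectively $\Mat_{2n}(R)$), so the first thing to establish is that $\Mat_n(R)$ is itself compact. Since $R$ is a pro-$p$ domain it is a compact topological ring: indeed $R$ is the inverse limit of the finite rings $R/\m^k$, hence compact, and a finite Cartesian power of a compact space is compact by Tychonoff's theorem. Thus $\Mat_n(R) \cong R^{(n^2)}$ is compact.

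Next I would show that each of these groups is a closed subset of the corresponding matrix space. Each group is defined by a matrix equation of the form $A^t M A = M$, where $M$ is one of the fixed matrices $K_n$ or $J_{2n}$. The key observation is that the map $A \mapsto A^t M A$ is continuous, since in each coordinate it is given by a polynomial (in fact a bilinear expression) in the entries of $A$, and polynomial maps with coefficients in $R$ are continuous with respect to the topology on $R$. The defining set $\{ A \mid A^t M A = M \}$ is the preimage of the single point $\{ M \}$ under this continuous map; since $R$ is Hausdorff the singleton $\{ M \}$ is closed, so its preimage is closed in $\Mat_n(R)$.

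Combining these two facts finishes the argument: $\SO_n(R)$ and $\Sp_{2n}(R)$ are closed subsets of the compact Hausdorff space $\Mat_n(R)$ (respectively $\Mat_{2n}(R)$), and a closed subset of a compact space is compact. It is worth noting that I do not even need to invoke invertibility separately, because the relation $A^t M A = M$ with $M$ invertible already forces $\det(A)$ to be a unit, so the solution set automatically consists of invertible matrices; hence there is no need to carve out $\Mat_n(R)$ to $\GL_n(R)$ before taking the closed subset.

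I do not expect any genuine obstacle here, as every step is standard point-set topology once the compactness of $R$ is in hand. The only point requiring a modicum of care is the justification that $R$ is compact as a topological ring and that the entrywise polynomial maps defining the groups are continuous; both follow directly from the profinite (pro-$p$) structure of $R = \varprojlim R/\m^k$ and the fact that ring operations in each finite quotient are continuous. Everything else is a routine application of the closedness of preimages of closed sets under continuous maps and the fact that closed subspaces of compact spaces are compact.
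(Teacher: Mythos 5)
Your proof is correct and follows essentially the same route as the paper: establish that $R$ is compact (the paper does this via $R = \widehat{R}$ being the $\m$-adic completion with finite residue field, you via the profinite presentation $R = \varprojlim R/\m^k$, which amounts to the same thing), deduce that $\Mat_n(R) = R^{(n^2)}$ is compact, and conclude that the groups are compact as closed subsets. The only difference is one of detail: the paper simply asserts closedness, whereas you verify it explicitly via continuity of the polynomial map $A \mapsto A^t M A$ and add the (correct) observation that invertibility is automatic since $A^t M A = M$ forces $\det(A)$ to be a unit.
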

\begin{proof}
Since $R / \mathfrak{m}$ is finite then $\widehat{R},$ the completion of $R$ with respect to the $\mathfrak{m}$-adic topology, is compact. But $R$ is complete so $R = \widehat{R}$ is compact, and thus $\Mat_n(R) = R^{(n^2)}$ is compact. Hence, the closed subgroups $\SO_n(R)$ and $\Sp_{2n}(R)$ are compact. 
 \end{proof}
We  are now in a position for describing the Hausdorff spectrum of those profinite groups. 
\begin{corollary}
\label{alphades}
For any $n \geq 1$ 
\begin{enumerate}
\item[\textup{(i)}] $\sthspec \left(\Sp_{2n}(\F_p[[t]]) \right)$ contains the real interval $\left[ 0, \frac{n+1}{2n+1} \right],$
\item[\textup{(ii)}] $\sthspec\left(\SO_{2n}(\F_p[[t]])\right)$ contains the real interval $\left[0, \frac{n}{2n-1} \right]$ and
\item[\textup{(iii)}] $\sthspec\left(\SO_{2n+1}(\F_p[[t]])\right)$ contains the real interval $\left[0, \frac{n+1}{2n+1} \right].$
\end{enumerate}
\end{corollary}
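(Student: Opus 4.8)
The plan is to mirror the proof of Corollary \ref{SL}: for each of the three classical groups $G$ I will exhibit a soluble $\F_p[[t]]$-analytic subgroup of dimension exactly the stated fraction of $\dim G$, and then invoke Theorem \ref{T1} together with Lemma \ref{zatabel} to draw the whole interval $[0, \dim H / \dim G]$ into the spectrum. The natural candidate for $H$ is the Borel subgroup, realised concretely as $\B(G) = \B \cap G$, where $\B = T_m(\F_p[[t]])$ is the group of invertible upper triangular $m \times m$ matrices with $m = 2n$ or $m = 2n+1$. Being the intersection of $G$ with a Zariski closed subgroup of $\GL_m(\F_p[[t]])$, Corollary \ref{algebraic} guarantees that $\B(G)$ is an $\F_p[[t]]$-analytic subgroup; and since it is contained in the soluble group $T_m(\F_p[[t]])$, it is itself soluble.

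The substance of the argument lies in computing $\dim \B(G)$. The anti-diagonal forms $K_m$ and $J_{2n}$ have been chosen precisely so that each $G$ is a \emph{split} classical group, for which the diagonal matrices lying in $G$ constitute a maximal torus and the upper triangular matrices lying in $G$ constitute a Borel subgroup. Hence $\dim \B(G) = \mathrm{rank}(G) + |\Phi^+|$, where $\Phi^+$ is the set of positive roots; equivalently, since $\dim G = \mathrm{rank}(G) + 2|\Phi^+|$, one obtains the clean identity
$$\dim \B(G) = \frac{\dim G + \mathrm{rank}(G)}{2}.$$
Each of the three root systems $B_n$, $C_n$, $D_n$ has rank $n$, so substituting the dimensions $\dim G = n(2n+1)$ (types $B_n$ and $C_n$) and $\dim G = n(2n-1)$ (type $D_n$) gives $\dim \B(G) = n(n+1)$ for $\Sp_{2n}$ and $\SO_{2n+1}$, and $\dim \B(G) = n^2$ for $\SO_{2n}$.

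Dividing by $\dim G$ then reproduces the three ratios $\frac{n+1}{2n+1}$, $\frac{n}{2n-1}$ and $\frac{n+1}{2n+1}$ of the statement. With $\B(G)$ soluble, Theorem \ref{T1} yields $\sthspec(\B(G)) = [0,1]$; applying Lemma \ref{zatabel} to the chain $\{1\} \unlhd \B(G) \leq G$ then places $[0, \dim \B(G)/\dim G]$ inside $\sthspec(G)$, as required. The one delicate point is the assertion that $\B \cap G$ is genuinely a Borel subgroup of $G$ — that the upper triangular matrices fill out a maximal connected soluble subgroup and do not lose dimension — and this is exactly where the split realisation of the forms enters. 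If one prefers to avoid invoking the structure theory, the same dimension count can be verified directly by writing a general element of $\B(G)$ in block form and counting the free entries subject to the defining relation $A^t J_{2n} A = J_{2n}$ (respectively $A^t K_m A = K_m$).
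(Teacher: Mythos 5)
Your proposal is correct and follows the paper's proof in all essentials: the paper likewise takes $\B(G) = G \cap T_m(\F_p[[t]])$, gets analyticity and solubility from Corollary \ref{algebraic}, and concludes via Theorem \ref{T1} and Lemma \ref{zatabel}, arriving at the same dimensions $n^2+n$, $n^2$, $n^2+n$ for the three Borel subgroups. The only difference is how that dimension is justified: you use the root-datum count $\dim \B(G) = \operatorname{rank}(G) + |\Phi^+|$ for the split forms, whereas the paper writes the Borel out explicitly in block form (citing \cite{MalTes}) and counts free entries, which is exactly the direct verification you offer as a fallback.
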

\begin{proof}
Denote  $R = \F_p[[t]].$ \\
\linebreak
(i) Note that $\B\left(\Sp_{2n}(R)\right) = \Sp_{2n}(R) \cap T_{2n}(R),$
which after a simple computation (cf. \cite[Example 6.7(4)]{MalTes}) can be seen to coincide with the product of the set 
$$\left\{  \begin{pmatrix}
A & 0 \\
0 & K_n A^{-t} K_n  
\end{pmatrix} \ \middle| \  A \in T_n(R) \right \}$$
 with $\left\{  \begin{pmatrix}
I_n & K_nS \\
0 & I_n 
\end{pmatrix} \ \middle| \  S \in \Mat_n(R) \textrm{ is symmetric} \right \}$.\\

 Hence, $\B(\Sp_{2n}(R))$ is a soluble $\F_p[[t]]$-analytic subgroup of dimension $n^2 +n,$ and thus by Theorem \ref{T1} and Lemma \ref{zatabel} we have
$$ \left[ 0, \frac{n+1}{ 2n+1}\right] =  \left[ 0, \frac{\dim{\B(\Sp_{2n} (R))}}{ \dim{\Sp_{2n}(R)}}\right] \subseteq \sthspec(\Sp_{2n}(R)).$$
(ii) In much the same way one has $\B(\SO_{2n}(R)) = \SO_{2n}(R) \cap T_{2n}(R),$ which is a  soluble $\F_p[[t]]$-analytic subgroup of dimension $n^2,$ so
$$ \left[ 0, \frac{n}{2n-1}\right] =  \left[ 0, \frac{\dim{\B(\SO_{2n} (R))}}{ \dim{\SO_{2n}(R)}}\right] \subseteq \sthspec(\SO_{2n}(R)).$$
(iii) Similarly one has  $\B(\SO_{2n +1}(R)) = \SO_{2n+1}(R) \cap T_{2n+1}(R),$ which is a  soluble $\F_p[[t]]$-analytic subgroup of dimension $n^2 +n,$ so
$$ \left[ 0, \frac{n+1}{ 2n+1}\right] =  \left[ 0, \frac{\dim{\B(\SO_{2n+1} (R))}}{ \dim{\SO_{2n+1}(R)}}\right] \subseteq \sthspec(\SO_{2n+1}(R)).  $$

 \end{proof}

Note in passing that for any classical Chevalley group one has $\alpha \geq 1/2.$ \\

Finally, we shall provide examples of compact $\F_p[[t]]$-analytic groups whose spectrum is not the whole interval. More precisely,  we will show that in most of the classical Chevalley groups $1$ is an isolated point in the spectrum, thus proving that $\alpha < 1.$ 

In passing, we note that in \cite[Theorem 1.4]{BSh} it is proved that if  $p > 2$ then
$$\sthspec\left(\SL_n^1(\F_p[[t]]\right)  \cap \left(1 - \frac{1}{n+1}, 1  \right) = \varnothing.$$

We will prove an analogous result for other classical Chevalley groups following the same technique and working in the corresponding graded Lie algebra. Given an $R$-analytic group and a $p$-central series $\{G_n\}_{n \in \N}$ (note that by \cite[Proposition 13.22]{DDMS} any $R$-standard filtration is a $p$-central series),  we can define the restricted graded Lie $\F_p$-algebra $\mathcal{L}(G) = \bigoplus_{n \geq 0} G_{n} /G_{n+1}$ (cf. \cite[Definition 2.9]{LuSh}). Any closed subgroup  $H \leq G$ defines a graded subalgebra of $\mathcal{L}(G)$,  which by abuse of notation we will denote by $\mathcal{L}(H),$ and is given by 
$$\mathcal{L}(H) = \bigoplus_{n \geq 0} \frac{(H \cap G_n)G_{n+1}}{ G_{n+1}}.$$
Although every closed subgroup defines a graded subalgebra, there might be graded subalgebras that do not arise in this way. \\
\linebreak
\textit{Notation.}
Since $\dim$ usually denotes the analytic dimension of a manifold, henceforth $\dim_F$ will be used to denote the $F$-vector space dimension. \\

Given a graded $\F_p$-algebra $L = \oplus_{n \geq 0} L_n$ and a graded $\F_p$-subalgebra $K = \bigoplus_{n \geq 0} K_{n},$  the \emph{Hausdorff density} is defined as follows
$$\hD(K) := \liminf_{n \rightarrow \infty} \frac{\sum_{m \leq n} \dim_{\F_p}{K_m}}{\sum_{m \leq n} \dim_{\F_p}{L_m}}.$$

Clearly, in view of the preceding definitions for any closed subgroup $H$ we have $\hD(\mathcal{L}(H)) = \hdim_{\{ G_n \}}(H)$ (cf. \cite[Lemma 5.1]{BSh}).\\

Let now $F$ be a field and  $\G$  a finite dimensional perfect (i.e. $[\mathcal{G}, \mathcal{G}]= \mathcal{G}$) $F$-algebra; then we can consider the infinite dimensional $F$-algebra $\mathcal{G} \otimes_F tF[t]$ with Lie bracket defined by $[A \otimes t^n, B \otimes t^m ] := [A, B]_{\mathcal{G}} \otimes t^{n +m}$ on elementary tensors. We now note the following:

\begin{lemma}
\label{Zorn}
Let $\mathcal{L} = \mathcal{G} \otimes_F tF[t]$ be as above. Then, any graded $F$-subalgebra of infinite codimension is contained in a graded $F$-subalgebra of infinite codimension, maximal with respect to that property. 
\end{lemma}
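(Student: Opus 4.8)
The plan is to apply Zorn's lemma to the poset $\mathcal{P}$ consisting of all graded $F$-subalgebras of $\mathcal{L}$ of infinite codimension that contain the given subalgebra, ordered by inclusion. A maximal element of $\mathcal{P}$ is exactly what the lemma demands: any infinite-codimension graded subalgebra properly containing such a maximal element would again contain the given subalgebra and hence lie in $\mathcal{P}$, so maximality in $\mathcal{P}$ coincides with maximality among all infinite-codimension graded subalgebras. The poset is nonempty, as it contains the given subalgebra, so the only substantial point is to verify the chain hypothesis of Zorn's lemma.

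Given a chain $\{K^{(i)}\}$ in $\mathcal{P}$, I would propose $K^{\ast} = \bigcup_i K^{(i)}$ as its upper bound. Writing the grading as $\mathcal{L} = \bigoplus_{n \geq 1} \mathcal{L}_n$ with $\mathcal{L}_n = \mathcal{G} \otimes t^n$, each homogeneous component $K^{\ast}_n = \bigcup_i K^{(i)}_n$ is an increasing union of subspaces of the finite-dimensional space $\mathcal{L}_n$, hence a subspace; and closure under the bracket is immediate since any two homogeneous elements already lie in a common $K^{(i)}$ by totality of the order. Thus $K^{\ast}$ is a graded $F$-subalgebra containing the given one. The real content, and the step I expect to be the main obstacle, is to show that $K^{\ast}$ still has infinite codimension, since a priori an ascending union of infinite-codimension objects can collapse to finite codimension.

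This is precisely where \emph{perfectness} of $\mathcal{G}$ enters. For a graded subalgebra $K$ set $A(K) = \{ n \geq 1 : K_n = \mathcal{L}_n \}$. Because $\dim_F \mathcal{G}$ is finite and each proper component contributes defect at least one, $K$ has finite codimension if and only if $A(K)$ is cofinite in $\mathbb{Z}_{\geq 1}$. Using $[\mathcal{G}, \mathcal{G}] = \mathcal{G}$ one gets $[\mathcal{L}_a, \mathcal{L}_b] = \mathcal{L}_{a+b}$, so whenever $a, b \in A(K)$ one has $K_{a+b} \supseteq [K_a, K_b] = \mathcal{L}_{a+b}$ and hence $a+b \in A(K)$; thus $A(K)$ is a subsemigroup of $\mathbb{Z}_{\geq 1}$. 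I would then invoke the elementary numerical-semigroup fact (Schur, Sylvester--Frobenius) that a subsemigroup of $\mathbb{Z}_{\geq 1}$ is cofinite if and only if the gcd of its elements is $1$; in particular a subsemigroup of infinite complement cannot contain finitely many elements whose gcd is $1$.

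Finally I would derive a contradiction from assuming $K^{\ast}$ has finite codimension. Since a homogeneous component is full iff some member of the chain is already full there, $A(K^{\ast}) = \bigcup_i A(K^{(i)})$, again a subsemigroup. If it were cofinite it would contain two consecutive integers $a$ and $a+1$, which are coprime; each lies in some member of the chain, so by totality both lie in a single $A(K^{(i)})$, forcing $\gcd A(K^{(i)}) = 1$ and hence $A(K^{(i)})$ cofinite, contradicting $K^{(i)} \in \mathcal{P}$. Therefore $A(K^{\ast})$ has infinite complement, $K^{\ast}$ has infinite codimension, and $K^{\ast} \in \mathcal{P}$ is the sought upper bound; Zorn's lemma then produces the maximal subalgebra. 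I expect the semigroup reduction powered by perfectness to be the crux, the surrounding Zorn bookkeeping being formal.
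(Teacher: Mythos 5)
Your proof is correct, and it reaches the conclusion by a genuinely different key lemma than the paper, even though the Zorn bookkeeping is the same on both sides: chain, componentwise union, and pushing a finite witness into a single member of the chain by totality. The paper's finite witness is a generating set: it first proves that $\mathcal{L}$ is a finitely generated $F$-algebra (this is where perfectness enters for them, via $\mathcal{G}\otimes t^{n} = [\mathcal{G}\otimes t^{n-1},\mathcal{G}\otimes t]$), and then argues that if the union $H$ of a chain had finite codimension it would itself be finitely generated, so that a finite generating set would lie in a single $H_{i_0}$, forcing $H=H_{i_0}$ and a contradiction. Your finite witness is instead a pair of consecutive full degrees: perfectness makes the set $A(K)$ of full homogeneous components a subsemigroup of $\mathbb{Z}_{\geq 1}$, and the Sylvester--Frobenius fact identifies finite codimension with $\gcd A(K)=1$. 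One thing your route buys is self-containedness at a delicate point: the implication ``finite codimension in a finitely generated Lie algebra, hence finitely generated'' that the paper invokes is not a general fact --- in the free Lie algebra on $x,y$, the graded subalgebra spanned by $x$ together with all elements of degree at least $2$ has codimension one yet is free of infinite rank --- so justifying it for $\mathcal{L}$ requires exactly the graded-plus-perfectness bookkeeping (full tail components generate the tail) that your semigroup argument makes explicit. What the paper's route buys is brevity: granting that implication, no numerical-semigroup input is needed. Your surrounding details (the union being a graded subalgebra, fullness of a component of the union being witnessed by some chain member since each $\mathcal{L}_n$ is finite dimensional, and the reduction of maximality in the restricted poset to maximality among all infinite-codimension graded subalgebras) are all handled correctly.
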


\begin{proof}

Firstly,  $\mathcal{L}$ is a finitely generated $F$-algebra. Indeed, let $\{ x_1, \dots, x_m \}$ be a generating set of $\G,$ then $S = \{ x_1, \dots, x_m, x_1 \otimes t, \dots, x_m \otimes t \} $ generates $\mathcal{L}.$ Indeed, $\langle S \rangle_F$ contains $\mathcal{G}$ and $\mathcal{G} \otimes t;$ and assume by induction that $\langle S \rangle_F$ contains $\G \otimes t^{n-1}.$ Then, since $\G$ is perfect
$$\mathcal{G} \otimes t^n = \left[\mathcal{G}, \mathcal{G} \right] \otimes t^n = \left[\mathcal{G} \otimes t^{n-1}, \mathcal{G} \otimes t \right] \subseteq \langle S \rangle_F.$$

Now, the result follows by Zorn's Lemma. Indeed, consider the set of graded $F$-subalgebras of infinite codimension, which is partially ordered under inclusion. Let $\{ H_i\}_{i \in I}$ be a totally ordered subset of graded $F$-algebras of infinite codimension and consider $H = \cup_{i \in I} H_i,$ which is a graded $F$-subalgebra of $\mathcal{L}.$ Suppose by contradiction that $H$ has finite codimension in $\mathcal{L}$, and so it is a finitely generated $F$-algebra. Assume that $H = \langle h_1, \dots, h_r \rangle_F,$ then there exists an $i_0 \in I$ such that $h_k \in H_{i_0}$ for all $k \in \{ 1, \dots, r \}$ and so $H = H_{i_0}$ has infinite codimension in $\mathcal{L},$ which is a contradiction. Hence  $\{ H_i\}_{i \in I}$ has a maximal member with respect to inclusion, which concludes the proof. 
 \end{proof}

If one requires central simplicity, we have the following result (cf. \cite[Corollary 5.3]{BSh}) bounding the Hausdorff density of graded subalgebras that are maximal with respect to having infinite codimension.

\begin{theorem}
\label{BSh}
Let $\mathcal{G}$ be a central simple algebra over a field $F$ and let $\mathcal{L} = \mathcal{G} \otimes_{F} t F[t].$ Then the density of a graded subalgebra that is maximal with respect to having infinite codimension is either $1/q,$ where $q$ is a prime, or $\dim_F{\mathcal{H}}/ \dim_F{\mathcal{G}},$ where $\mathcal{H}$ is a maximal graded subalgebra of $\mathcal{G}.$
\end{theorem}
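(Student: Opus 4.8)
The plan is to analyze the structure of a graded subalgebra $\mathcal{K} = \bigoplus_{n \geq 0} \mathcal{K}_n$ of $\mathcal{L} = \mathcal{G} \otimes_F tF[t]$ that is maximal among those of infinite codimension, and to show its Hausdorff density is constrained to the two stated possibilities. Since $\mathcal{G} \otimes t^n$ is the $n$-th graded piece of $\mathcal{L}$ (with $\mathcal{L}_0 = 0$), I would first record that $\mathcal{K}_n \subseteq \mathcal{G} \otimes t^n$ may be identified with an $F$-subspace $V_n \subseteq \mathcal{G}$, and that the bracket condition $[\mathcal{K}_n, \mathcal{K}_m] \subseteq \mathcal{K}_{n+m}$ translates into $[V_n, V_m]_{\mathcal{G}} \subseteq V_{n+m}$. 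The density $\hD(\mathcal{K})$ is then $\liminf_n \frac{\sum_{m \leq n} \dim_F V_m}{n \dim_F \mathcal{G}}$, so the problem becomes understanding the asymptotic growth of $\dim_F V_n$.

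First I would exploit central simplicity of $\mathcal{G}$ together with maximality to pin down the $V_n$. The key dichotomy should hinge on whether the subalgebra is \emph{saturated} in a suitable sense: either the pattern of the $V_n$ is eventually periodic coming from a ``dilation'' $t \mapsto t^q$ for a prime $q$ (which forces $V_n = \mathcal{G}$ when $q \mid n$ and $V_n = 0$ otherwise, giving density $1/q$), or else some fixed proper subspace $\mathcal{H} = V_n$ stabilises and propagates, giving density $\dim_F \mathcal{H}/\dim_F \mathcal{G}$ with $\mathcal{H}$ a maximal graded subalgebra of $\mathcal{G}$. To make this precise I would argue that for large $n$ the spaces $V_n$ must stabilise to some subspace $\mathcal{H}$ (using that $\mathcal{G}$ is finite-dimensional, so the $V_n$ can take only finitely many values, and using the bracket-closure plus maximality to control them), and that $\mathcal{H}$ is then a subalgebra of $\mathcal{G}$; its maximality as a graded subalgebra of $\mathcal{G}$ follows from the maximality of $\mathcal{K}$ in $\mathcal{L}$, since a larger subalgebra of $\mathcal{G}$ would lift to a larger graded subalgebra of $\mathcal{L}$ still of infinite codimension.

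The main obstacle I expect is ruling out intermediate or irregular behaviour of the sequence $(V_n)$ and showing the clean trichotomy collapses to exactly the two listed densities. Specifically, one must show that if $\mathcal{K}$ is not of the ``periodic full'' type yielding $1/q$, then the stabilised subspace $\mathcal{H}$ is genuinely a \emph{maximal} proper subalgebra of $\mathcal{G}$ and that no $V_n$ can properly exceed $\mathcal{H}$ infinitely often without contradicting either infinite codimension or maximality. Here central simplicity is essential: it guarantees that the ideal-theoretic structure of $\mathcal{G}$ is rigid, so that the only obstruction to $\mathcal{K}$ being enlarged comes either from an arithmetic/periodic constraint on the grading (the prime $q$) or from a genuine maximal subalgebra $\mathcal{H} \lneq \mathcal{G}$. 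I would formalise the periodic case by considering the support $\Sigma = \{ n \geq 1 \mid V_n = \mathcal{G}\}$; central simplicity forces $\Sigma$ to be closed under addition, hence a numerical semigroup, and maximality of infinite codimension forces $\Sigma$ to be of the form $q\mathbb{N}$ for a prime $q$ precisely when $\mathcal{H} = \mathcal{G}$ on that support. Separating these two regimes cleanly, and verifying that no mixed regime survives maximality, is the crux; the density computation itself is then a routine evaluation of the $\liminf$.
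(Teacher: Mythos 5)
The first thing to say is that the paper does not prove this statement at all: it is imported verbatim from Barnea--Shalev \cite{BSh} (Corollary 5.3 there), which in turn is a corollary of the Barnea--Shalev--Zelmanov classification of graded subalgebras of affine Kac--Moody algebras that are maximal with respect to having infinite codimension (\cite{BSZ}, Theorem 4.1, as the paper itself notes in Remark \ref{simple}). So any self-contained proof must essentially reprove that classification theorem, which is a substantial piece of work. Your proposal does not do this: after correctly translating the problem into a sequence of subspaces $V_n \subseteq \mathcal{G}$ with $[V_n,V_m] \subseteq V_{n+m}$, it asserts the dichotomy (periodic-full type giving $1/q$, versus stabilisation to a maximal subalgebra $\mathcal{H}$ giving $\dim_F \mathcal{H}/\dim_F\mathcal{G}$) and then concedes that separating these regimes and excluding mixed or irregular behaviour ``is the crux.'' That crux \emph{is} the theorem; what surrounds it in your write-up is a restatement of the conclusion together with the routine density evaluations, not an argument.

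Beyond deferring the main point, the concrete mechanisms you do offer are flawed. The stabilisation step rests on ``$\mathcal{G}$ is finite-dimensional, so the $V_n$ can take only finitely many values'': this is false over an infinite field $F$ (a finite-dimensional space has infinitely many subspaces), and even over a finite field a sequence with finitely many values need not stabilise --- indeed your own first case, $V_n = \mathcal{G}$ for $q \mid n$ and $V_n = 0$ otherwise, is a maximal subalgebra whose components never stabilise, so stabilisation cannot follow from maximality alone. The lifting argument for maximality of $\mathcal{H}$ inside $\mathcal{G}$ is also gapped: a strictly larger subalgebra $\mathcal{H}'\supsetneq \mathcal{H}$ yields the graded subalgebra $\mathcal{H}'\otimes tF[t]$ of infinite codimension, but this need not contain $\mathcal{K}$ (the low-degree components of $\mathcal{K}$ may fail to lie in $\mathcal{H}'$), so it does not contradict the maximality of $\mathcal{K}$. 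Finally, your argument never uses \emph{centrality}, only simplicity; but the hypothesis of central simplicity is essential over a general field --- when the centroid of $\mathcal{G}$ is a proper extension $E\supsetneq F$, additional maximal graded subalgebras arise from intermediate structures over $E$ and the list of densities changes, which is exactly why \cite{BSZ} needs centrality (and why the paper's Remark \ref{simple} must invoke a special argument to drop it over finite fields). A proof in which the hypothesis is never engaged cannot be complete.
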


\begin{remark}
\label{simple}
Recall that a finite dimensional algebra over a field $F$ is called central simple when it is simple and its centroid coincides with $F.$ Nevertheless, if $F$ is finite, the assumption of the previous theorem can be weakened to only requiring that $\mathcal{G}$ is simple. Indeed, the previous theorem is a corollary of \cite[Theorem 4.1]{BSZ} and it is pointed out in \cite[Remark after Theorem 1.1]{BSZ} that when $F$ is finite simplicity of  $\mathcal{G}$ is enough.
\end{remark}

Now, we apply this result to see that $1$ is an isolated point in the standard spectrum of most of the classical Chevalley groups. 

\begin{corollary}
\label{isolatua}
Let $X_{n}$ be a root system of type $A_n$ $(n \geq1)$, $B_n$ $(n \geq 2)$,  $C_n $ $(n \geq 3)$ or $D_{n}$ $(n \geq 4),$ let $G = G(X_n)$ be the classical Chevalley group associated to $X_n$ on $ \F_p[[t]]$ and  $L(Q)$  the algebra associated to that root system on an arbitrary ring $Q.$ If $L(\F_p)$ is simple, then
$$\sthspec(G) \cap \left(1- \frac{1}{\dim{G}},1 \right) = \varnothing.$$ 
\end{corollary}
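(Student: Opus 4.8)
The plan is to reduce the statement about the group $G$ to a statement about its associated graded Lie algebra $\mathcal{L}(G)$, and then to invoke the density bound of Theorem \ref{BSh} (in the simplicity-only form of Remark \ref{simple}). First I would fix an $\F_p[[t]]$-standard filtration $\{G_n\}_{n \in \N}$ of $G$ coming from an open standard subgroup, and recall from the discussion preceding the statement that for any closed subgroup $H \leq G$ one has $\hdim_{\{G_n\}}(H) = \hD(\mathcal{L}(H))$. The key structural input is the identification of the graded Lie algebra: for the congruence-type filtration the successive quotients $G_n/G_{n+1}$ are all isomorphic to $L(\F_p)$, the Chevalley Lie algebra of type $X_n$ over $\F_p$, and the multiplication $t$ shifts degree; this yields a graded-algebra isomorphism $\mathcal{L}(G) \cong L(\F_p) \otimes_{\F_p} t\F_p[t]$. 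Thus I am exactly in the setting $\mathcal{L} = \mathcal{G} \otimes_F tF[t]$ of Theorem \ref{BSh} with $F = \F_p$ and $\mathcal{G} = L(\F_p)$, which is simple by hypothesis.

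Next I would argue that any Hausdorff dimension strictly less than $1$ is bounded away from $1$. Suppose $H \leq_c G$ satisfies $\sthdim(H) < 1$; then $\mathcal{L}(H)$ is a graded $\F_p$-subalgebra of $\mathcal{L}$ of Hausdorff density strictly less than $1$, which forces $\mathcal{L}(H)$ to have infinite codimension in $\mathcal{L}$. By Lemma \ref{Zorn}, $\mathcal{L}(H)$ is contained in a graded subalgebra $\mathcal{M}$ that is maximal among those of infinite codimension, and by monotonicity of the density $\hD(\mathcal{L}(H)) \leq \hD(\mathcal{M})$. Applying Theorem \ref{BSh} together with Remark \ref{simple}, the density $\hD(\mathcal{M})$ is either $1/q$ for some prime $q$, or $\dim_{\F_p}\mathcal{H}/\dim_{\F_p}\mathcal{G}$ for a maximal graded subalgebra $\mathcal{H}$ of $\mathcal{G} = L(\F_p)$. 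In either case this value is at most $1 - 1/\dim_{\F_p}\mathcal{G}$: a proper subalgebra of the finite-dimensional $\mathcal{G}$ has $\F_p$-dimension at most $\dim_{\F_p}\mathcal{G} - 1$, and $1/q \leq 1/2 \leq 1 - 1/\dim_{\F_p}\mathcal{G}$ since $\dim_{\F_p}\mathcal{G} \geq 2$. Finally I would identify $\dim_{\F_p}\mathcal{G} = \dim_{\F_p}L(\F_p)$ with the analytic dimension $\dim G$, which holds because each graded piece $G_n/G_{n+1}$ has $\F_p$-dimension equal to $\dim G$ (the standard filtration has $|G_n : G_{n+1}| = p^{\dim G}$ for large $n$, as reflected in Lemma \ref{azpikoa}).

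Combining these, every closed subgroup $H$ with $\sthdim(H) \neq 1$ in fact satisfies $\sthdim(H) \leq 1 - 1/\dim G$, which is precisely the assertion that the open interval $\left(1 - \frac{1}{\dim G}, 1\right)$ meets $\sthspec(G)$ trivially. I expect the main obstacle to be the clean identification $\mathcal{L}(G) \cong L(\F_p) \otimes_{\F_p} t\F_p[t]$: one must verify both that the chosen $\F_p[[t]]$-standard filtration induces on $G$ exactly the congruence filtration whose graded pieces are the Chevalley Lie algebra, and that the induced bracket is the expected one with the degree-shifting $t$-multiplication. This requires care because the group commutator only agrees with the Lie bracket modulo higher terms, and one must check that the standard filtration of the classical group is, up to the filtration-independence granted by Theorem \ref{standard}, the relevant matrix congruence filtration; this is where the specific realisation of each Chevalley group $G(X_n)$ inside $\GL_m(\F_p[[t]])$ and the simplicity hypothesis on $L(\F_p)$ enter. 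The density estimate itself is then a routine consequence once the algebra has been correctly identified.
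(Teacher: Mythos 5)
Your proposal is correct and follows essentially the same route as the paper: identify the graded algebra of an open standard subgroup with $L(\F_p)\otimes_{\F_p} t\F_p[t]$ (the paper does this via \cite[Exercise 13.11, Proposition 13.27]{DDMS}), pass from $\sthdim(H)<1$ to infinite codimension of $\mathcal{L}(H)$, apply Lemma \ref{Zorn} and Theorem \ref{BSh} with Remark \ref{simple}, and bound the density by $1-1/\dim_{\F_p}L(\F_p)=1-1/\dim G$. The only cosmetic difference is that the paper works throughout inside the standard subgroup $S$ and finishes with $\sthspec(G)=\sthspec(S)$, whereas you phrase the argument for $G$ itself; the "main obstacle" you flag is exactly what the cited DDMS results dispose of.
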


\begin{proof}
On the one hand, 
by \cite[Exercise 13.11]{DDMS} it follows that $G$ contains an open $\F_{p}[[t]]$-standard subgroup, say $S,$ such that $\mathcal{L}(S) \cong L(\F_p[[t]]).$ Furthermore, by \cite[Proposition 13.27]{DDMS} there is an isomorphism $\mathcal{L}(S) \cong L_0 \otimes_{\F_p} \grm{\mideal}$ as $\F_p$-vector spaces where 
$$L_0 = L(\F_p[[t]]) / t L(\F_p[[t]]) \cong L(\F_p)\quad  \textrm{ and } \quad \grm{\mideal} = \bigoplus_{n \geq 1} \left(t^n \right) / \left(t^{n+1}\right).$$
Hence $\mathcal{L}(S) \cong L(\F_p) \otimes_{\F_p} t\F_p[t].$\\

On the other hand, let $H \leq_c S$ be a closed subgroup such that $\sthdim(H) < 1.$ Then $|S:H|$ is infinite and so $\mathcal{L}(H)$ has infinite codimension in $\mathcal{L}(S).$ Since $L(\F_p)$ is simple, according to Lemma \ref{Zorn} we have that $\mathcal{L}(H)$ is contained in a graded subalgebra of $\mathcal{L}(S),$ say $\mathcal{M},$ maximal with respect to having infinite codimension. Hence by Theorem \ref{BSh} and Remark \ref{simple} we have
\begin{align*}
\sthdim(H)  &= 
 \hD(\mathcal{L}(H)) \leq \hD(\mathcal{M}) \\& \leq
\max\left\{ \frac{1}{2}, \frac{\dim_{\F_p}{\mathcal{H}}}{ \dim_{\F_p}{L(\F_p)} } \ \middle| \ \mathcal{H} \textrm{ maximal subalgebra of } L(\F_p) \right\} \\& \leq
1 - \frac{1}{\dim_{\F_p}{L(\F_p)}} = 1 - \frac{1}{\dim{S}},
\end{align*}
because $\dim{S} = \dim{G(X_n)} = \dim_{\F_p}L(\F_p).$ Therefore, the result follows since $\sthspec(G) = \sthspec(S).$ 
 \end{proof}

Finally, the classical Chevalley algebras $\mathfrak{so}_{n}(F)$ and $\mathfrak{sp}_{2n}(F)$ over a field of positive characteristic $p$ have been thoroughly studied. When $p =2$ none of them is simple, but when $p \geq 3$ it is well known that $\mathfrak{so}_{n}(F)$ $(n \geq 5)$ and $\mathfrak{sp}_{2n}(F)$ $(n \geq 2)$ are simple algebras (cf. \cite{Str}). Hence we deduce that: 

\begin{corollary}
\label{batiso}
Let $p \geq 3$ be a prime and assume $G$   is either $\SO_{n}(\F_p[[t]])$ $(n \geq 5)$ or $\Sp_{2n}(\F_p[[t]])$ $(n \geq 2)$.
 Then  
$$\sthspec(G) \cap \left(1- \frac{1}{\dim{G}},1 \right) = \varnothing.$$ 
\end{corollary}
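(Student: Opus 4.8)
The plan is to reduce Corollary \ref{batiso} directly to Corollary \ref{isolatua}, treating the latter as a black box. Since Corollary \ref{isolatua} already establishes that
$$\sthspec(G) \cap \left(1 - \frac{1}{\dim{G}}, 1 \right) = \varnothing$$
for any classical Chevalley group $G = G(X_n)$ over $\F_p[[t]]$ whose associated $\F_p$-algebra $L(\F_p)$ is \emph{simple}, the only thing that remains is to verify that the simplicity hypothesis is satisfied for the two families in question under the stated constraints $p \geq 3$, namely $\SO_n(\F_p[[t]])$ with $n \geq 5$ and $\Sp_{2n}(\F_p[[t]])$ with $n \geq 2$.

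First I would identify the relevant algebras: the group $\SO_n(\F_p[[t]])$ is the Chevalley group attached to a root system of type $B_m$ (for $n = 2m+1$) or $D_m$ (for $n = 2m$), and its associated algebra is $L(\F_p) = \mathfrak{so}_n(\F_p)$; similarly $\Sp_{2n}(\F_p[[t]])$ corresponds to type $C_n$ with associated algebra $\mathfrak{sp}_{2n}(\F_p)$. Next I would invoke the classical structure theory of Lie algebras over fields of positive characteristic: when $p = 2$ these orthogonal and symplectic algebras degenerate and fail to be simple, but for $p \geq 3$ it is well known (cf. \cite{Str}) that $\mathfrak{so}_n(\F_p)$ is simple for $n \geq 5$ and $\mathfrak{sp}_{2n}(\F_p)$ is simple for $n \geq 2$. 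These are precisely the ranges appearing in the statement, which is exactly why the hypotheses are imposed. Thus in each case the hypothesis $L(\F_p)$ simple of Corollary \ref{isolatua} holds.

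With simplicity in hand, the proof is immediate: applying Corollary \ref{isolatua} to $G \in \{\SO_n(\F_p[[t]]), \Sp_{2n}(\F_p[[t]])\}$ yields at once that $\sthspec(G) \cap \left(1 - \frac{1}{\dim{G}}, 1\right) = \varnothing$, which is the desired conclusion. I do not expect any genuine obstacle here, since all the analytic and graded-algebra machinery has been absorbed into Corollary \ref{isolatua}; the remaining content is purely the bookkeeping of matching each Chevalley group to its root system and citing the appropriate simplicity result. The one point requiring mild care is confirming that the excluded small-rank cases (and the prime $p = 2$) are genuinely the ones where simplicity fails, so that the ranges $n \geq 5$ and $n \geq 2$ are tight for the argument rather than merely sufficient; this is again a direct appeal to the references on simplicity of classical Lie algebras in characteristic $p$.
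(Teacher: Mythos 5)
Your proposal is correct and matches the paper's own argument: the paper likewise deduces Corollary \ref{batiso} by observing that for $p \geq 3$ the algebras $\mathfrak{so}_n(\F_p)$ $(n \geq 5)$ and $\mathfrak{sp}_{2n}(\F_p)$ $(n \geq 2)$ are simple (citing \cite{Str}) and then invoking Corollary \ref{isolatua}. The only extra content in your write-up --- checking that the excluded cases $p=2$ and small ranks genuinely fail simplicity --- is not needed for the proof, since the stated ranges only have to be sufficient, not tight.
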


Classical Chevalley groups over the local field $\F_p(\!(t)\!)$ are linear simple algebraic groups. More generally, if $G$ is an algebraic group over the local field $F_p(\!(t)\!),$ then the group of $\F_p[[t]]$-rational points, $G(\F_p[[t]]),$ admits naturally an $\F_p[[t]]$-analytic manifold structure (cf. \cite[Proposition I.2.5.2]{Mar}). Hence, the above result suggests the following conjecture:

\begin{conjecture}
Let $G$ be a linear $\F_p(\!(t)\!)$-algebraic semisimple group. Then
$$\sthspec(G(\F_p[[t]])) \cap \left(1 - \frac{1}{\dim{G(\F_p[[t]])}} , 1 \right) = \varnothing.$$
\end{conjecture}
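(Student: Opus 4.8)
The plan is to reduce the conjecture to a density computation in the graded Lie algebra attached to the group, generalising the argument of Corollary \ref{isolatua} from a single simple factor to the semisimple situation. Let $G$ be a linear $\F_p(\!(t)\!)$-algebraic semisimple group and write $\Gamma = G(\F_p[[t]])$, a compact $\F_p[[t]]$-analytic group by \cite[Proposition I.2.5.2]{Mar}. By Corollary \ref{estandarrera} we may pass to an open $\F_p[[t]]$-standard subgroup $S \leq_o \Gamma$ and work with $\sthspec(S)$. As in the proof of Corollary \ref{isolatua}, the restricted graded Lie algebra $\mathcal{L}(S)$ attached to an $\F_p[[t]]$-standard filtration identifies, via \cite[Proposition 13.27]{DDMS}, with $L \otimes_{\F_p} t\F_p[t]$ for a finite-dimensional $\F_p$-Lie algebra $L$ with $\dim_{\F_p} L = \dim{\Gamma}$, and for every $H \leq_c S$ one has $\sthdim(H) = \hD(\mathcal{L}(H))$. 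Since $\sthdim(H) < 1$ forces $H$ to have infinite index in $S$, and hence $\mathcal{L}(H)$ to have infinite codimension in $\mathcal{L}(S)$, the conjecture reduces to the assertion that every graded $\F_p$-subalgebra of infinite codimension in $L \otimes_{\F_p} t\F_p[t]$ has Hausdorff density at most $1 - 1/\dim_{\F_p} L$; granting this, every $H\leq_c S$ satisfies $\sthdim(H)\in[0, 1 - 1/\dim_{\F_p} L]\cup\{1\}$, which is exactly the desired gap.

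The provable core is a direct-sum generalisation of Theorem \ref{BSh}. In good characteristic the reduction $L$ is semisimple, so it decomposes as a direct sum $L = L_1 \oplus \dots \oplus L_k$ of simple $\F_p$-algebras, whence $\mathcal{L}(S) = \bigoplus_{i=1}^{k} A_i$ with $A_i = L_i \otimes_{\F_p} t\F_p[t]$. I would prove by induction on $k$ that every graded subalgebra of infinite codimension in $\bigoplus_i A_i$ has density at most $1 - 1/\dim_{\F_p} L$. The base case $k=1$ is Theorem \ref{BSh}, Remark \ref{simple} and Lemma \ref{Zorn} combined: an infinite-codimension subalgebra lies in a maximal one, whose density is at most $\max\{1/2, 1 - 1/\dim_{\F_p} L_1\} = 1 - 1/\dim_{\F_p} L_1$, and $\hD$ is monotone under inclusion. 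For the inductive step set $A = A_k$, $B = \bigoplus_{i<k} A_i$, $a = \dim_{\F_p} L_k$ and $b = \dim_{\F_p} L - a$. For a graded subalgebra $M \leq A \oplus B$, applying the projection $\pi_B$ to each graded component gives
\begin{equation*}
\dim_{\F_p} M_n = \dim_{\F_p} (M \cap A)_n + \dim_{\F_p} \pi_B(M)_n,
\end{equation*}
so the codimension of $M$ splits as that of $M \cap A$ in $A$ plus that of $\pi_B(M)$ in $B$; infinite codimension of $M$ thus forces at least one of these two to be infinite. Writing $D(n) = \sum_{m \leq n}\dim_{\F_p}(A\oplus B)_m$, the weights $\sum_{m\leq n}\dim_{\F_p}A_m/D(n)$ and $\sum_{m\leq n}\dim_{\F_p}B_m/D(n)$ converge to $a/(a+b)$ and $b/(a+b)$, so a convergent factor may be pulled out of the limit inferior. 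If $M \cap A$ has infinite codimension then, bounding $\pi_B(M)$ trivially and invoking the base case for $L_k$ (so $\hD(M\cap A)\leq 1-1/a$),
\begin{equation*}
\hD(M) \leq \frac{a}{a+b}\,\hD(M \cap A) + \frac{b}{a+b} \leq 1 - \frac{1}{a+b};
\end{equation*}
if instead $\pi_B(M)$ has infinite codimension then, invoking the inductive hypothesis for $B$ (so $\hD(\pi_B(M))\leq 1-1/b$),
\begin{equation*}
\hD(M) \leq \frac{a}{a+b} + \frac{b}{a+b}\,\hD(\pi_B(M)) \leq 1 - \frac{1}{a+b}.
\end{equation*}
Since $a+b = \dim_{\F_p} L$, the induction closes and the reduced statement holds whenever $L$ is a sum of simple $\F_p$-algebras.

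The hardest part, and the reason the statement is only a conjecture, is controlling the graded algebra $L$ for an \emph{arbitrary} semisimple $G$ and \emph{every} prime $p$. Two obstacles stand out. First, in small characteristic the reduction $L$ need not be semisimple: $\mathfrak{sl}_n(\F_p)$ acquires a centre when $p \mid n$, and types $B$, $C$, $D$ degenerate at $p=2$, so $L$ fails to split into simple algebras and the density of a maximal subalgebra of infinite codimension is no longer governed by Theorem \ref{BSh}; this is precisely what forced the hypotheses $p\geq3$, $n\geq5$ in Corollary \ref{batiso}. Second, for non-split, ramified or otherwise twisted semisimple groups the relevant filtration is not an untwisted congruence filtration but a Moy--Prasad type filtration, whose associated graded is a twisted loop algebra: the pieces $\mathcal{L}(S)_n$ need no longer be isomorphic as $n$ varies, so the clean tensor description $L\otimes_{\F_p} t\F_p[t]$ breaks down and one would need a version of Theorem \ref{BSh} for general $\Z$-graded Lie algebras of this shape. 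I expect the split, good-characteristic case to be fully accessible by the reduction and induction sketched above, while the general conjecture requires both a classification-free simplicity input for modular Lie algebras and an extension of the maximal-subalgebra density theorem to twisted graded algebras.
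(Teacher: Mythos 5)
The statement you were asked to prove is the closing \emph{conjecture} of the paper: the authors offer no proof of it, so there is no argument of theirs to compare yours against, and the honest question is only how far your partial attempt actually gets. The provable core you present is sound and is a genuine extension of the paper's Corollary \ref{isolatua}. Your reduction (Corollary \ref{estandarrera}, then $\sthdim(H)=\hD(\mathcal{L}(H))$, then infinite index forces infinite codimension) follows the paper's template exactly, and the new ingredient -- the induction over the simple summands -- checks out: $M\cap A$ is a graded subalgebra of the graded ideal $A$, $\pi_B(M)$ is a graded subalgebra of $B$ because $\pi_B$ is a homomorphism of Lie algebras, the per-degree identity $\dim_{\F_p} M_n=\dim_{\F_p}(M\cap A)_n+\dim_{\F_p}\pi_B(M)_n$ is correct, infinite codimension of $M$ does force one of the two constituents to have infinite codimension, and since the weights $an/\bigl((a+b)n\bigr)$ and $bn/\bigl((a+b)n\bigr)$ are literally constant, pulling them out of the limit inferior is legitimate; both cases then yield $1-1/(a+b)$, with the base case being exactly the paper's combination of Lemma \ref{Zorn}, Theorem \ref{BSh} and Remark \ref{simple}. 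This already gives something the paper does not state: the gap at $1$ for groups whose reduced algebra $L$ is a direct sum of simple $\F_p$-Lie algebras, e.g.\ direct products of the groups in Corollary \ref{batiso}.

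As a proof of the conjecture, however, the attempt is incomplete, essentially where you say it is, with two refinements worth recording. First, in characteristic $p$ ``semisimple'' (no nonzero soluble ideals) does \emph{not} imply ``direct sum of simples'' -- by Block's theorem a semisimple modular Lie algebra only sits between $\bigoplus_i S_i\otimes A_i$ and its derivation algebra -- and moreover $L$ depends on the isogeny type of $G$, not just on its root datum ($\mathfrak{pgl}_n(\F_p)$ versus $\mathfrak{sl}_n(\F_p)$ when $p\mid n$), so even the phrase ``good characteristic'' needs to specify the form of $G$; your argument covers the split, simply connected case with $p$ good for every factor, and no more. Second, your worry about twisted forms is slightly misplaced in one respect: by the same citation the paper uses \textup(\cite[Proposition 13.27]{DDMS}\textup), the graded algebra of \emph{any} $\F_p[[t]]$-standard filtration already has the untwisted shape $L_0\otimes_{\F_p} t\F_p[t]$, so the tensor description never breaks; what breaks for anisotropic or ramified $G$ is any control over $L_0$ -- it need not be simple, perfect, or close to the expected Chevalley reduction -- and without simplicity both Lemma \ref{Zorn} (which needs perfectness) and Theorem \ref{BSh} are unavailable. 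That is the genuine open gap, and it is why the statement remains a conjecture rather than a corollary of your induction.
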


\section*{Acknowledgements}
The authors would like to thank G.A. Fern\'andez-Alcober for his helpful comments.

\end{document}